\newtheorem{Th}{Theorem}
\newtheorem{prop}{Proposition}
\newtheorem{lema}{Lemma}
\theoremstyle{definition}
\newtheorem{Def}{Definition}
\newtheorem{Rem}{Remark}
\def\eps{\epsilon}
\def\rr{\mathbb{R}}
\def\cc{\mathbb{C}}
\def\dx{\mathrm{\ dx}} 
\def\dy{\mathrm{dy}}
\numberwithin{equation}{section} \numberwithin{Th}{section}
\numberwithin{cor}{section} \numberwithin{lema}{section}
\numberwithin{prop}{section} \numberwithin{Rem}{section}
\numberwithin{conj}{section} 
\numberwithin{Def}{section}
\newcommand{\dist}{\mathop{\mathrm{dist}}\nolimits}
\definecolor{DarkBlue}{rgb}{0,0.1,0.7} % David
\definecolor{DarkGreen}{rgb}{0,0.5,0.1} % Cristi
\newcommand\soutD{\bgroup\markoverwith
{\textcolor{DarkGreen}{\rule[.5ex]{2pt}{1pt}}}\ULon}
\newcommand\soutP{\bgroup\markoverwith
{\textcolor{blue}{\rule[.5ex]{2pt}{1pt}}}\ULon}
\newcommand{\Hm}[1]{\leavevmode{\marginpar{\tiny%
$\hbox to 0mm{\hspace*{-0.5mm}$\leftarrow$\hss}%
\vcenter{\vrule depth 0.1mm height 0.1mm width \the\marginparwidth}%
\hbox to
0mm{\hss$\rightarrow$\hspace*{-0.5mm}}$\\\relax\raggedright #1}}}
\title{Hardy inequalities for magnetic $p$-Laplacians}
\author{Cristian Cazacu}
\address{Cristian Cazacu: Faculty of Mathematics and Computer Science \\
	University of Bucharest\\
	14 Academiei Street \\
	010014 Bucharest, Romania
	\&
	Gheorghe Mihoc-Caius Iacob Institute of Mathematical
	Statistics and Applied Mathematics of the Romanian Academy\\ No.13 Calea 13 Septembrie, Sector 5\\
	050711 Bucharest, Romania}
\email{cristian.cazacu@fmi.unibuc.ro}
\author{David Krej\v{c}i\v{r}\'{i}k}
\address{D.~Krej\v{c}i\v{r}\'{i}k: Department of Mathematics, Faculty of Nuclear Sciences and Physical Engineering, Czech Technical University in Prague, Trojanova 13, 120 00, Prague, Czech Republic}
\email{david.krejcirik@fjfi.cvut.cz}
\author{Nguyen Lam}
\address{Nguyen Lam: School of Science and the Environment
		Grenfell Campus, Memorial University of Newfoundland
		Corner Brook, NL A2H5G4, Canada }
	\email{nlam@grenfell.mun.ca}
\author{Ari Laptev}
\address{A.~Laptev, Department of Mathematics, Imperial 
		College London, Huxley Building, 180 
		Queen's Gate, London SW7 2AZ, United Kingdom,
		and Sirius Mathematics Center,
Sirius University of Science and Technology,
1 Olympic Ave, 354340, Sochi, Russia}
\email{a.laptev@imperial.ac.uk}
\date{3 May 2023}
\begin{document}

\begin{abstract}
We establish improved Hardy inequalities for the magnetic $p$-Laplacian 
due to adding nontrivial magnetic fields. We also prove that for Aharonov-Bohm magnetic fields the sharp constant in the Hardy inequality becomes strictly larger than in the case of a magnetic-free $p$-Laplacian.   We also post some remarks with open problems. 
%Some general $L^p$ magnetic-free Hardy inequalities 
%in the spirit of Allegretto and Huang \cite{AH}
%are also considered.
\medskip \\
{\it 2020 Mathematics Subject Classification:} 35A23, 35R45, 83C50,  35Q40, 34K38. \\
{\it Key words:} $L^p$ Hardy inequalities; magnetic fields; optimal constants;  
\end{abstract}

\maketitle

	\section{Introduction}
	Essentially due to G.~H.~Hardy \cite{HLP} it is well known that the following $L^p$-Hardy inequality holds in any dimension $d\geq 2$ for every $1< p<d$. If $u\in W^{1, p}(\rr^d)$ then  $u/|x|\in L^p(\rr^d)$ and it satisfies  
		\begin{equation}\label{LpHardy}
		\int_{\rr^d} |\nabla u|^p \dx \geq \mu_{p, d} \int_{\rr^d} \frac{|u|^p}{|x|^p} \dx, \qquad \mu_{p, d}:=\left(\frac{d-p}{p}\right)^p.
		\end{equation}
Moreover, the constant $\mu_{p, d}$ is optimal  
in the sense that~\eqref{LpHardy} does not hold
with any bigger constant.
		
The work on Hardy inequality \eqref{LpHardy} and its extensions 
(including bounded domains) have emerged significantly in the last decades due to its applications to nonlinear partial differential equations with singular potentials both 
for stationary and evolution boundary value problems. To randomly pick up few relevant references concerning $L^p$ Hardy--Sobolev type inequalities with positive reminder terms involving potentials in terms of either the distance to the boundary or the distance to a point we may refer for instance to \cite{AP, Adi, BFT, AH, FMT, PT, DFP, H}, the works cited therein as well as the subsequent developments on this subjects. For more recent papers related to $L^p$-Hardy inequalities and their applications to singular elliptic equations we refer to \cite{GP, DP, LP} and references therein. Part of the quoted papers provide various proofs for inequality \eqref{LpHardy} which hold for real-valued functions $u$. For the sake of clarity, later in this paper we will present a short proof of \eqref{LpHardy} which applies also for complex-valued functions $u$. The extension of inequality \eqref{LpHardy} to domains with boundary,
subject to Dirichlet boundary conditions,
follows straightforwardly by the trivial extension of the test functions~$u$ 
by zero outside the domain under consideration. 

\subsection*{The free $p$-Laplacian}  
The Hardy inequality~\eqref{LpHardy}  
provides important information on
properties of the well-known Dirichlet $p$-Laplace operator, $1< p< \infty$, and its $L^2(\rr^d)$ sesquilinear form  formally defined initially on $C_c^\infty(\rr^d)$ by 
$$
-\Delta_p u:=-\mathrm{div}(|\nabla u|^{p-2}\nabla u), \qquad 
h_p(u,v):=(-\Delta_p u, v)_{L^2(\rr^d)}
=\int_{\rr^d} \left(-\Delta_p u\right) \overline{v}\dx,
$$
respectively.
 The associated $L^2(\rr^d)$ closed quadratic form $h_p$ of $-\Delta_{p}$ is given on its form domain $\mathcal{D}(h_p):=W^{1, p}(\rr^d)$ by  
\begin{equation}\label{quadform}
h_p[u]=  \int_{\rr^d} |\nabla u|^p \dx, \quad \forall u \in \mathcal{D}(h_p).  
\end{equation}
 As usual, we understand the positivity of $-\Delta_p$ through
the positivity of its quadratic form:
$$
  -\Delta_p\geq 0 
  \quad:\Longleftrightarrow\quad
  %\stackrel{def}{\Longleftrightarrow} 
  h_p[u]\geq 0,\quad \forall u\in \mathcal{D}(h_p);
$$
we say that $-\Delta_p$ is a \emph{non-negative} operator.  

In order to give a motivation of the main results of the paper we need to introduce some definitions about finer properties of $-\Delta_p$. 
\begin{Def}\label{subcritical}
We say that
\begin{center}
  $-\Delta_{p}$ is a \emph{subcritical} operator 
  \quad $:\Longleftrightarrow$\quad
  %$\stackrel{equiv}{\Longleftrightarrow}$ 
  $-\Delta_{p}$ satisfies a Hardy-type inequality,
\end{center} 
which means that 
there exists a non-negative potential 
$V\in L_\mathrm{loc}^1(\rr^d)$, $V \neq 0$, such that $-\Delta_{p}\geq V |\cdot|^{p-2}\cdot$, in the sense of $L^2$ quadratic forms, that is, 
$$h_p[u]\geq \int_{\mathbb{R}^d} V|u|^p \dx, \quad \forall u\in W^{1, p}(\rr^d). $$
Otherwise, we say that $-\Delta_{p}$ is 
a \emph{critical} operator (i.e.\ there is no Hardy inequality for~$-\Delta_p$). 
\end{Def}
In view of \eqref{LpHardy} we deduce that in the cases $1< p< d$ the $p$-Laplace operator is subcritical since we may take $V(x):=1/|x|^p$
and write in the sense of forms
\begin{equation}\label{sense}
  -\Delta_p\geq\mu_{p,d} \ \frac{|\cdot|^{p-2} \cdot }{|x|^p} .
\end{equation}

However, when $p\geq d$ the  $p$-Laplace operator becomes critical. More precisely we have
\begin{prop}\label{critical} Let $p\geq d$. 
If $V\in L_{\mathrm{loc}}^1(\rr^d)$ is a non-negative potential such that 
\begin{equation}
\int_{\rr^d}|\nabla u|^p \dx \geq \int_{\rr^d} V |u|^p \dx, \quad \forall u\in C_c^{\infty}(\rr^d),   
\end{equation}  
then $V=0$ a.e. in $\rr^d$.   
\end{prop}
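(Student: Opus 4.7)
The plan is to construct, for every radius $R>0$, a sequence of admissible test functions $(u_n) \subset C_c^\infty(\mathbb{R}^d)$ which is identically equal to $1$ on the ball $B_R$ and for which the $p$-Dirichlet energy $\int_{\mathbb{R}^d} |\nabla u_n|^p \dx$ tends to zero as $n\to\infty$. Plugging such a sequence into the assumed inequality yields
$$
\int_{B_R} V \dx \leq \int_{\mathbb{R}^d} V |u_n|^p \dx \leq \int_{\mathbb{R}^d} |\nabla u_n|^p \dx \xrightarrow[n\to\infty]{} 0,
$$
so that $V=0$ a.e.\ on $B_R$ for every $R>0$, hence $V=0$ a.e.\ on $\mathbb{R}^d$.

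The construction of $(u_n)$ splits according to whether $p>d$ or $p=d$. In the \emph{supercritical} case $p>d$, it suffices to take the standard linear cut-off $u_n(x):=\chi(|x|/(nR))$, where $\chi\in C_c^\infty([0,\infty))$ satisfies $\chi\equiv 1$ on $[0,1]$ and $\chi\equiv 0$ on $[2,\infty)$, scaled so that $u_n\equiv 1$ on $B_R$ and $\mathrm{supp}\, u_n \subset B_{2nR}$. A direct computation gives $\int_{\mathbb{R}^d}|\nabla u_n|^p \dx \leq C\,(nR)^{d-p}$, which tends to zero by the assumption $d-p<0$.

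In the \emph{critical} case $p=d$, the linear cut-off fails because the above exponent vanishes; one must use a logarithmic cut-off. More precisely, for $n>1$ define
$$
  v_n(x):=
  \begin{cases}
  1, & |x|\leq R,\\[2pt]
  \dfrac{\log n - \log(|x|/R)}{\log n}, & R\leq |x|\leq nR,\\[6pt]
  0, & |x|\geq nR,
  \end{cases}
$$
which is Lipschitz with gradient supported on the annulus $R\leq |x|\leq nR$ and satisfying $|\nabla v_n(x)|=1/(|x|\log n)$ there. Then
$$
\int_{\mathbb{R}^d}|\nabla v_n|^p \dx
=\frac{|\mathbb{S}^{d-1}|}{(\log n)^p}\int_R^{nR} r^{d-1-p}\dr
=\frac{|\mathbb{S}^{d-1}|}{(\log n)^{p-1}},
$$
where we used $p=d$. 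A standard mollification replaces $v_n$ by a genuinely smooth $u_n\in C_c^\infty(\mathbb{R}^d)$ which still equals $1$ on $B_R$ and whose $p$-Dirichlet energy differs from that of $v_n$ by an arbitrarily small amount, so the energy still goes to zero as $n\to\infty$.

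The only real subtlety is the critical case $p=d$, where the choice of the logarithmic profile is dictated precisely by the scale invariance that makes the $p$-Dirichlet energy of a linear cut-off nontrivial; this is exactly the place where the $L^p$-Hardy inequality~\eqref{LpHardy} breaks down, and the construction above reflects this fact. The mollification step and the $L^1_{\mathrm{loc}}$ regularity of $V$ (needed to assert $\int_{B_R}V\dx<\infty$ and to pass to the limit) are routine.
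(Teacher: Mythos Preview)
Your proof is correct and follows essentially the same capacity-type argument as the paper: construct compactly supported test functions that equal~$1$ on large balls and whose $p$-Dirichlet energy tends to zero, then conclude $V=0$ from the assumed inequality. The only cosmetic difference is that the paper uses a single logarithmic cut-off sequence (equal to~$1$ on $B_{1/\varepsilon}$ and vanishing outside $B_{1/\varepsilon^2}$) that works uniformly for all $p\geq d$ and then applies Fatou's lemma, whereas you split into a linear cut-off for $p>d$ and a logarithmic one for $p=d$ and argue via exhaustion by balls.
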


In view of \eqref{LpHardy} we also may address the question of studying the criticality of the Hardy operator $$H:=-\Delta_p-\mu_{p,d}\frac{|\cdot|^{p-2} \cdot }{|x|^p}\geq 0, \quad 1<p<d.$$

The following proposition shows that $H$ is critical for $1<p<d$.   
\begin{prop}\label{prop2} Let $1< p< d$. If $V\in L_{\mathrm{loc}}^1(\rr^d)$ is a non-negative potential such that 
	\begin{equation}\label{ineq}
	\int_{\rr^d}|\nabla u|^p \dx - \mu_{p,d}\int_{\rr^d} \frac{|u|^{p}}{|x|^p} \dx \geq \int_{\rr^d} V |u|^p \dx, \quad \forall u\in C_c^\infty(\rr^d),   
	\end{equation}  
	then $V=0$ a.e. in $\rr^d$. 
	
	That is, the operator $-\Delta_p-\mu_{p,d}\frac{|\cdot|^{p-2} \cdot }{|x|^p}$ is critical when $1< p< d$.    
\end{prop}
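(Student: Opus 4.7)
The plan is to argue by contradiction via a null-sequence construction centred on the \emph{virtual ground state} of the Hardy operator $H:=-\Delta_p - \mu_{p,d}\,|\cdot|^{p-2}\cdot / |x|^p$. The candidate is the radial function $w(x):=|x|^{-(d-p)/p}$, which satisfies the pointwise identity $|\nabla w|^p = \mu_{p,d}\,w^p/|x|^p$ on $\mathbb{R}^d\setminus\{0\}$ and formally solves $-\Delta_p w = \mu_{p,d}\,w^{p-1}/|x|^p$ there, but $w\notin W^{1,p}(\mathbb{R}^d)$ because of the blow-up at the origin and the slow decay at infinity. Criticality of $H$ should be the quantitative expression of the fact that $w$ is a ``generalised ground state'' lying outside the form domain.

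The core task is to construct a sequence $u_n\in C_c^\infty(\mathbb{R}^d)$ with two features: first, for every compact $K\subset\mathbb{R}^d\setminus\{0\}$, $u_n\equiv w$ on $K$ for all sufficiently large $n$; second,
\begin{equation*}
J(u_n):=\int_{\mathbb{R}^d}|\nabla u_n|^p \dx - \mu_{p,d}\int_{\mathbb{R}^d}\frac{|u_n|^p}{|x|^p} \dx \longrightarrow 0 \quad\text{as } n\to\infty.
\end{equation*}
The natural ansatz is $u_n := w\,\chi_n$, with $\chi_n(x):=\psi(n^{-1}\log|x|)$ for a fixed $\psi\in C_c^\infty(\mathbb{R})$ equal to~$1$ on $[-1,1]$ and supported in $[-2,2]$; the key feature is that $\chi_n$ varies on a \emph{logarithmic} scale, yielding $|\nabla\chi_n|\sim (n|x|)^{-1}$. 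A direct computation in polar coordinates with the substitution $s=\log|x|/n$ gives
\begin{equation*}
\int_{\mathbb{R}^d} w^p |\nabla\chi_n|^p \dx = \frac{\omega_{d-1}}{n^{p-1}} \int_{\mathbb{R}} |\psi'(s)|^p \,\mathrm{d}s \longrightarrow 0,
\end{equation*}
since $p>1$. Combining this with the Picone identity of Allegretto--Huang, $J(u_n) = \int L(u_n,w) \dx$ with $L(u,v)\geq 0$ and $L(v,v)\equiv 0$, reduces the problem to showing that the pointwise Picone remainder $L(u_n,w)$ is dominated in $L^1$ by a suitable combination of the small quantity above and bounded Hölder products involving $\int \chi_n^p|\nabla w|^p \dx$.

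With such a null sequence in hand, the contradiction is immediate. If $V\not\equiv 0$, the local integrability of $V\geq 0$ furnishes a compact $K\subset\mathbb{R}^d\setminus\{0\}$ with $\int_K V \dx>0$; for $n$ large, $u_n\equiv w$ on $K$, whence
\begin{equation*}
J(u_n)\;\geq\;\int_{\mathbb{R}^d}V|u_n|^p \dx\;\geq\;\int_K V\,w^p \dx\;\geq\;\bigl(\inf_K w\bigr)^p\int_K V \dx\;>\;0,
\end{equation*}
in contradiction with $J(u_n)\to 0$.

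The main technical obstacle is the bound $\int L(u_n,w)\dx\to 0$ when $p\neq 2$. For $p=2$ this is trivial because $L$ reduces to $w^2|\nabla\chi_n|^2$ exactly. For $p\neq 2$ one cannot expand $|\chi_n\nabla w + w\nabla\chi_n|^p$ by orthogonality and must rely on a convexity estimate of the form $\bigl||a+b|^p-|a|^p-p|a|^{p-2}\,\mathrm{Re}(a\cdot\bar b)\bigr|\leq c_p\bigl(|a|^{p-2}|b|^2+|b|^p\bigr)$ (with its natural counterpart for $1<p<2$), applied with $a=\chi_n\nabla w$ and $b=w\nabla\chi_n$. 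Each term thus produced is either a pure power of the small quantity $n^{1-p}$ or a Hölder product of it with the logarithmically divergent quantity $\int \chi_n^p|\nabla w|^p \dx\sim \log n$; a direct count of exponents shows that every such combination is of order $n^{-1}$ or smaller, so that $J(u_n)\to 0$ as required.
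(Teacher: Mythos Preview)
Your proposal is correct and follows essentially the same route as the paper: both build a null sequence $u_n=w\chi_n$ from the virtual ground state $w(x)=|x|^{-(d-p)/p}$ and a logarithmic cut-off, show that the Hardy deficit $J(u_n)\to 0$, and conclude $V=0$ from the lower bound $\int V|u_n|^p$. The only cosmetic differences are that the paper uses piecewise-linear (Lipschitz) cut-offs $\theta_\varepsilon$ (hence first extends the hypothesis to $W^{1,p}$), cites \cite{PS} for the bound $J(u_\varepsilon)=O(1/\log(1/\varepsilon))$, and finishes with Fatou's lemma, whereas you work directly in $C_c^\infty$, make the estimate explicit via the Picone remainder $L(u_n,w)=|a+b|^p-|a|^p-p|a|^{p-2}a\cdot b$ and standard convexity bounds, and close by restricting to a compact set; note only that $\int \chi_n^p|\nabla w|^p\dx\sim n$ (not $\log n$), which does not affect your exponent count.
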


The criticality of the shifted operator~$H$ can be also interpreted
as an extra optimality of the Hardy inequality~\eqref{sense}:
not only that the inequality does not hold with any bigger constant, but
no non-trivial non-negative function can be added 
to its right-hand side.

\subsection*{The magnetic $p$-Laplacian} 
For the scientific community working in mathematical phys\-ics area it is  also important to study Schr\"{o}dinger-type operators in the presence of magnetic fields. To fix the ideas, let $B:\rr^d\rightarrow \rr^{d \times d}$
be a smooth matrix-valued function representing the 
magnetic field. Such a function~$B$ can be identified with a smooth tensor field
(or a 2-differential form) that we denote by the same symbol~$B$.
Physics dictates that~$B$ satisfies the Maxwell equation $dB=0$, where $d$ is the exterior derivative. Mathematically, $B$~is a closed form. Consequently,
there exists
a smooth magnetic potential $A: \rr^d\rightarrow\rr^d$, 
which can be interpreted as a 1-differential form,
such that $dA=B$. 
More specifically, $B_{ij}=A_{j,i}-A_{i,j}$, where by $A_{j, i}$ we understand the partial derivatives $\partial A_j/\partial x^i$  (see e.g. \cite{CK}, where precise details for the formalism of $A$ and $B$ were given). Given these physical quantities, we can extend the notions of divergence $\textrm{div}$, gradient $\nabla$ and Dirichlet $p$-Laplacian $\Delta_p$  operators to their corresponding magnetic versions $\textrm{div}_A$,  $\nabla_A$ and $\Delta_{A, p}$, respectively. The \textit{magnetic $p$-Laplacian} is formally defined  on $C_c^\infty(\rr^d)$ by  
\begin{equation}\label{magnetic_p_Laplacian}
\quad \Delta_{A, p}u:=\textrm{div}_A(|\nabla_A u|^{p-2}\nabla_A u),
\end{equation}
where the magnetic gradient and magnetic divergence are given by 
\begin{equation}
\quad \nabla_A u:=\nabla u+i A(x) u; \quad \textrm{div}_{A}F := \textrm{div} F + i A\cdot F,  
\end{equation}
for any smooth vector field $F: \rr^d\rightarrow \cc^d$.

The associated quadratic form $h_{A, p}$ of  the Dirichlet magnetic $p$-Laplacian $\Delta_{A, p}$ with its form domain $\mathcal{D}(h_{A, p})$  is defined by 
$$h_{A, p}[u]:=\int_{\rr^d} |\nabla_A u|^p \dx =\int_{\rr^d}|\nabla u+i A(x) u|^p \dx, \quad \forall u\in \mathcal{D}(h_{A, p}):=\overline{C_c^\infty(\rr^d)}^{\|\cdot\|},$$
where the norm $\|\cdot\|$ with respect to which the closure is taken is given by 
$$\|u\|:=\sqrt[p]{h_{A, p}[u]+\|u\|_{L^p(\rr^d)}^p}.$$

Let us point that the quadratic form above and its domain are independent on the choice of $A$ (for a given $B$). 
Indeed, if $A, \tilde{A}:\rr^d\rightarrow \rr^d$ are two magnetic potentials such that $dA=d\tilde{A}=B$ then $A-\tilde{A}$ is a closed 1-form. Then from the Poincar\'{e} lemma  
we obtain that $A-\tilde{A}$ is exact form, so there exists a scalar field $\phi: \rr^d\rightarrow \rr$ such that $A-\tilde{A}=d \phi$. It is easy to see that 
\begin{equation}\label{inv_quad_form}
\mathcal{D}(h_{A, p})=\mathcal{D}(h_{\tilde{A}, p}) 
\quad\textrm{ and } 
h_{A, p}[\psi]=h_{\tilde{A}, p}[\psi e^{i\phi}], \quad \forall \psi \in C_c^\infty(\rr^d). 
\end{equation}

In view of \eqref{inv_quad_form} the magnetic Hardy inequalities under consideration do  not depend on the choice of $A$ (for distinct magnetic potentials $A$, $\tilde{A}$ they are equivalent).
This argument also shows that the operators
$\Delta_{A, p}$ and $\Delta_{\tilde{A}, p}$ 
are equivalent in the sense of the relation
$
  \Delta_{A, p}
  = e^{-i\phi} \Delta_{\tilde{A}, p} e^{i\phi}
$.
This is known as the gauge invariance if $p=2$.

An important tool in the study of magnetic fields is the diamagnetic inequality also called the Kato's inequality  (see, e.g., \cite[Sec.~5.3, Thm.~5.3.1]{BEL}). 
It says that 
\begin{equation}\label{diamagnetic}
|\nabla_A u(x)|\geq |\nabla |u|(x)| \quad \textrm{ a.e. } x\in \rr^d, \ \forall u\in \mathcal{D}(h_{A, p}), 
\end{equation}  
It is clear that $W^{1, p}(\rr^d)\subset \mathcal{D}(h_{A, p})$ if $A$ is bounded. Also, in view of \eqref{diamagnetic}, $u\in \mathcal{D}(h_{A, p})$ implies $|u|\in W^{1, p}(\rr^d)$. 

We extend the notions of subcriticality/criticality
of Definition~\ref{subcritical} also to $-\Delta_{A, p}$.
Of course, if $B=0$ then one may choose $A=0$ 
and therefore $\Delta_{A, p}=\Delta_{p}$, 
i.e. the magnetic-free $p$-Laplacian 
is just the standard $p$-Laplacian. 

If $p=2$, it is well known that
introducing non-trivial magnetic perturbations of Hamiltonian operators induces repulsive effects in quantum mechanics.
These physical effects were mathematically quantified by 
improved Hardy-type inequalities in~\cite{LW, Weidl_1999}  
and also  improved Rellich-type inequalities in \cite{EL}. For more recent Hardy and Rellich inequalities  for Aharonov-Bohm type magnetic fields, further developments and applications in the $L^2$-setting we mention \cite{CK,FKLV, DEL, LRY, BCF}. 
The objective of the present paper is to investigate
these improvements beyond the linear case $p=2$.
More specifically,	
when replacing the $p$-Laplacian with the non trivial magnetic $p$-Laplacian, 
we intend to show that
the corresponding $L^p$ Hardy inequalities are improved.

\subsection*{Main results}

Our main results read as follows.

\begin{Th}\label{main_th_1}
	Let $p\geq d$ and $B$ be a smooth and closed magnetic field with $B\neq 0$. Then there exists a constant $C_{B, p, d}>0$ such that for any magnetic potential $A$ with $dA=B$ we have 
	\begin{equation}\label{Hardy_magnetic_improved}
	\int_{\rr^d} |\nabla_A u|^p \dx \geq  C_{B, p, d} \int_{\rr^d} \rho(x) |u|^p \dx, \quad \forall u\in \mathcal{D}(h_{A, p}), 
	\end{equation}  
	where 
	\begin{equation*}
	\quad 	\rho(x) :=
	\frac{1}{|x|^d \left(|\log |x||^p+|x|^{p-d}\right)}.
	\end{equation*}
\end{Th}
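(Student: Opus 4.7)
The plan is to combine three ingredients by means of a smooth cutoff: a local \emph{magnetic} Poincar\'e-type estimate on a ball where $B\not\equiv 0$, a standard real-valued weighted Hardy inequality on the exterior, and the diamagnetic inequality~\eqref{diamagnetic} to pass from real to complex-valued functions. Since $B\neq 0$ is smooth, one may fix $R_0>0$ so that $B\not\equiv 0$ in $B_{R_0}$.

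First I would prove the local magnetic Poincar\'e inequality $\int_{B_{R_0}}|\nabla_A u|^p\,\dx \geq c_1\int_{B_{R_0}}|u|^p\,\dx$ by contradiction. A normalised counterexample sequence $(u_n)$ with $\|u_n\|_{L^p(B_{R_0})}=1$ and $\|\nabla_A u_n\|_{L^p(B_{R_0})}\to 0$ has, by \eqref{diamagnetic}, $\|\nabla|u_n|\|_{L^p(B_{R_0})}\to 0$; Rellich--Kondrachov then yields (along a subsequence) $|u_n|\to v$ in $L^p(B_{R_0})$ with $v$ a nonzero constant. Since $A$ is smooth and hence bounded on $B_{R_0}$, the identity $\nabla u_n = \nabla_A u_n - iAu_n$ shows $(u_n)$ is bounded in $W^{1,p}(B_{R_0})$, and a further extraction gives $u_n\to u$ in $L^p$ with $|u|\equiv v>0$ and $\nabla u + iAu = 0$ a.e. Bootstrapping from $\nabla u = -iAu$ yields $u\in W^{2,p}_{\mathrm{loc}}$, and the symmetry of mixed second partials forces $0=\partial_j\partial_k u-\partial_k\partial_j u=-iB_{jk}u$; since $|u|>0$ a.e., $B\equiv 0$ in $B_{R_0}$, contradicting the choice of $R_0$.

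Next, for real $w\in W^{1,p}(\rr^d)$ supported in $\{|x|>R_0/2\}$, I would establish the exterior weighted Hardy inequality $\int_{\rr^d}|\nabla w|^p\,\dx \geq c_2\int_{\rr^d}\rho(x)|w|^p\,\dx$. Using $|\nabla w|\geq|\partial_r w|$ in polar coordinates reduces this to a one-dimensional Hardy inequality on $(R_0/2,\infty)$: for $p>d$ this is the classical $L^p$ half-line Hardy inequality (subcritical thanks to $d-p<0$), while for $p=d$ it is its logarithmic refinement, tuned precisely to produce the weight $\rho$. To glue the two estimates, fix a smooth radial cutoff $\eta$ with $\eta\equiv 0$ on $B_{R_0/2}$ and $\eta\equiv 1$ outside $B_{R_0}$ and split
$$\int_{\rr^d}\rho|u|^p\,\dx \;\leq\; \|\rho\|_{L^\infty(B_{R_0})}\int_{B_{R_0}}|u|^p\,\dx + \int_{\rr^d}\rho\,(\eta|u|)^p\,\dx.$$
Step~1 controls the first term. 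Applying the exterior Hardy to $w=\eta|u|$, then the triangle bound $|\nabla(\eta|u|)|^p\leq C\bigl(|\nabla\eta|^p|u|^p+\eta^p|\nabla|u||^p\bigr)$ together with \eqref{diamagnetic}, estimates the second term by $C\int_{B_{R_0}}|u|^p\,\dx+C\int_{\rr^d}|\nabla_A u|^p\,\dx$; the first summand is again absorbed via Step~1.

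The hard part is the local magnetic Poincar\'e inequality. The diamagnetic inequality controls $|u|$ completely but gives no information on its phase, which is defined only up to gauge; the contradiction must therefore come from a \emph{gauge-invariant} object, and the curvature $B=dA$ is exactly such an obstruction: a flat magnetic connection $\nabla+iA$ on a nowhere-vanishing limit field forces $B\equiv 0$. Making this rigorous requires the bootstrap $u\in W^{2,p}_{\mathrm{loc}}$ and the distributional commutativity of mixed second partials, both of which are available thanks to the smoothness of $A$ and the regime $p\geq d\geq 2$.
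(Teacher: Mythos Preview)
Your proof is correct and follows essentially the same route as the paper: a local magnetic Poincar\'e inequality on a ball (obtained by compactness and the obstruction $B\neq 0$), an exterior weighted Hardy inequality for the modulus, and a smooth cut-off to glue the two pieces, with the diamagnetic inequality mediating throughout. The only tactical differences are that the paper extracts the local Poincar\'e via direct minimisation---writing a minimiser $g$ with $\nabla_A g=0$ and $|g|$ constant as $g=e^{i\varphi}$ to conclude $A=-\nabla\varphi$ is exact---rather than your sequential bootstrap and curvature identity $\partial_j\partial_k u-\partial_k\partial_j u=-iB_{jk}u$, and it inserts an additional Friedrichs inequality in the gluing step which your splitting of $\int\rho|u|^p$ does not need.
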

Theorem \ref{main_th_1} improves Proposition \ref{critical}  by asserting that a non-trivial magnetic $p$-Lapla\-cian $-\Delta_{A, p}$ becomes subcritical when $p\geq d$. Note that the constant $C_{B, p, d}$ depends on~$B$ and not on~$A$,
which shows that our result is correctly gauge invariant. Moreover, the local behavior of the weight $\rho$ both at zero and infinity seems to be sharp (it cannot be improved), as emphasized in the proof of Theorem \ref{main_th_1}.

Notice also that Theorem \ref{main_th_1} is known to hold  
when $1<p<d$ with $(\rho(x), C_{B, p, d})=\left(1/|x|^p, (d-p)^p/p^p\right)$  due to Hardy inequality  \eqref{LpHardy} and diamagnetic inequality \eqref{diamagnetic}.   

 Theorem \ref{main_th_1} was previously analyzed in the case $p=d=2$ in several papers. For any $B\neq 0$ it was proved in \cite{CK} with $\rho(x)=\frac{1}{1+|x|^2 |\log |x||^2}$. Under the additional condition 
$
  \frac{1}{2\pi} \int_{\rr^2}{} {}^{\star\!}B 
   \dx\not\in \mathbb{Z}
$ 
where ${}^{\star\!}B:=B_{12}$ 
it was proved with $\rho(x)=\frac{1}{1+|x|^2}$ in \cite{LW}. For more particular vector potentials of Aharonov--Bohm type $A(x)=\psi\left(\frac{x}{|x|}\right) \frac{(-x_2, x_1)}{|x|^2}$ it was shown with $\rho(x)=1/|x|^2$ also in \cite{LW}.  Notice that in all these results the potentials $\rho$ were bounded.  

The result of Theorem \ref{main_th_1} is new  for all the cases  $p> d$ or $p=q>2$. Moreover,  a very important feature of the weight $\rho$ arises at the origin where it is unbounded which allows to  improve the previous results even in $L^2$ setting where only bounded weights have been previously obtained, to our knowledge. 

Very recently, in \cite[Th. 6.1 and Rem. 6.6]{BFKP} the authors proved an inequality of type \eqref{Hardy_magnetic_improved} in the case $p=d=2$ with an unbounded weight $\rho$  but just for the particular case of an compactly supported magnetic field $B$.

% Direct computations show that the singular function  $v(x)=|x|^{-\frac{d-p}{p}}$ satisfies 
%$$-\Delta_p v(x)-\mu_{p, d}\frac{v(x)^{p-1}}{|x|^p}= 0, \quad \forall x\in \rr^d\setminus\{0\}.$$
%Therefore, if we consider the pair $(V, v)=\left(\frac{\mu_{p, d}}{|x|^p}, |x|^{-\frac{d-p}{p}}\right)$ in Theorem~\ref{main_th_2}, we obtain the following corollary.

\begin{Th}\label{main_th_2}
Let $2\leq p< d$ and $B$ be a smooth and closed magnetic field with $B\neq 0$. Then there exists a constant $c(p)>0$ such that for any vector field $A$ with $dA=B$ we have	\begin{equation}
	\label{H_impr2}
	\int_{\rr^d} |\nabla_A u|^p \dx - \mu_{p,d} \int_{\rr^d} \frac{|u|^p}{|x|^p} \dx\geq c(p)\int_{\rr^d} \left|\nabla_A (u |x|^{\frac{d-p}{p}})\right|^p |x|^{p-d} \dx, \quad \forall u\in \mathcal{D}(h_{A, p}).  
	\end{equation} 
The constant $c(p)$ in \eqref{H_impr2} is explicitly given by 
\begin{equation}\label{const}
	c(p):=\inf_{\left(
		s,t\right)  \in\mathbb{R}^{2}\setminus\left\{  \left(  0,0\right)  \right\}
	}\frac{\left[  t^{2}+s^{2}+2s+1\right]  ^{\frac{p}{2}}-1-ps}{\left[
		t^{2}+s^{2}\right]  ^{\frac{p}{2}}}\in\left(  0,1\right]  
\end{equation}
\end{Th}

	Our proof of Theorem \ref{main_th_2} is valid also in the case $p\in (1, 2)$ with the constant $c(p)$ in \eqref{const}. However, this case is irrelevant because $c(p)=0$ when $p\in (1, 2)$ (see Proposition \ref{prop}). 
\begin{Rem}[Open problems] 
	The validity of Theorem \ref{main_th_2}  in the cases $p\in (1, 2)$ remains an open problem 	(in this respect, the algebraic inequalities in \cite[Lemma 3.1]{BFT}
	or \cite[Lemma A.4]{S} could be eventually useful). \\
\indent Also,	finding the optimal constant  in inequality \eqref{H_impr2}
	is an interesting open problem. We expect it to be larger than the explicit constant in \eqref{const} and to depend also on the magnetic field $B$ and   $d$ as well.  
\end{Rem}

Theorem \ref{main_th_2} is very useful and effective for our further purposes. 
In the case $B=0$ and $p=2$ we can notice that $c(2)=1$ in \eqref{const} and this is indeed optimal since 
  inequality \eqref{H_impr2} becomes an identity. This ``magical'' identity (applied in \cite[Eq.~(4.7), p.~454]{BV} to radial functions) was in particular the key point to show improved Hardy inequalities in bounded domains with reminder terms in $L^2$ depending on the first eigenvalue of the Dirichlet Laplacian in two dimensions and on the volume of the domain. In particular, this shows that the operator  $-\Delta-\frac{\mu_{2, d}}{|x|^2}$ is subcritical in bounded domains and explicit lower bounds are known  (see, e.g.,  \cite[Thm.~4.1]{BV}). The obtained lower bounds are optimal in balls.  In view of Theorem \ref{main_th_2} similar arguments could be directly applied  in the $L^p$ setting, with $2\leq p <d$, to show that the operator $H:=-\Delta_{A, p}-\mu_{p,d}\frac{|\cdot|^{p-2} \cdot }{|x|^p}$ is subcritical as emphasized in the following theorem.

\begin{Th}\label{main_th_3}
	Let $2\leq  p< d$ and $B$ be a smooth and closed magnetic field with $B\neq 0$. Then there exists a constant $C_{B, p, d}>0$ such that for any vector field $A$ with $dA=B$ we have 
	\begin{equation}\label{Hardy_magnetic_improved2}
	\int_{\rr^d} |\nabla_A u|^p-\mu_{p, d} \int_{\rr^d}\frac{|u|^p}{|x|^p} \dx \geq  C_{B, p, d} \int_{\rr^d} \rho(x) |u|^p \dx, \quad \forall u\in \mathcal{D}(h_{A, p}), 
	\end{equation}  
	where 
	\begin{equation*}
		\rho(x):=\frac{1}{|x|^p\left(1+\left|\log |x|\right|^p\right)}.
	\end{equation*}
\end{Th}

Theorem \ref{main_th_3} improves the Hardy inequality \eqref{LpHardy}  for $2\leq p<d$,  when adding a non-trivial magnetic field.  Thus, the operator $-\Delta_{A, p}-\mu_{p, d}\frac{|\cdot|^{p-2} \cdot }{|x|^p}$ becomes subcritical  in contrast with the magnetic-free operator $-\Delta_p-\mu_{p, d}\frac{|\cdot|^{p-2} \cdot }{|x|^p}$ which is critical when $1< p< d$ (cf.  Proposition \ref{prop2}). Also, as in Theorem \ref{main_th_1}, the obtained weight $\rho$ is unbounded. This generalizes and improves \cite[Thm. 1.1]{CK} from $L^2$ to the $L^p$ setting by obtaining an unbounded weight $\rho$.

Finally, let us discuss the Aharonov--Bohm (AB) potential
\begin{equation}\label{AB}
  A_{\beta}(x)=\beta\frac{(x_2, -x_1)}{|x|^2}, \quad \beta\in \rr ,
\end{equation}
in the case of dimension $d=2$.
Though very special and unpleasantly singular
($A_{\beta}$ is not locally square integrable),
\eqref{AB}~is sometimes considered as a magnetic choice ``par excellence''.
Indeed, it leads to the Dirac delta magnetic field $B(x) = 2\pi \beta \delta(x)$,
so it can be considered as a magnetic analogue of point interactions
in the case of scalar potentials.
\begin{Th}\label{main_th_4}
	Let $d=2$, $1<p<2$ and let~$A_{\beta}$ be given by~\eqref{AB}. 
	If $\beta \not\in \mathbb{Z}$, then there exists a constant 
$$
  \lambda_\beta(p)> \left(\frac{2-p}{p}\right)^p
$$	
such that 
\begin{equation}\label{eq}
\int_{\rr^2} |\nabla_{A_\beta} u|^p \dx \geq  \lambda_\beta(p) \int_{\rr^2} \frac{|u|^p}{|x|^p} \dx, \quad \forall u\in C_c^\infty(\rr^2). 
\end{equation} 	
\end{Th}
To our knowledge Teorem \ref{main_th_4} is new in the literature and it provides an improvement  of the best constant $\lambda_\beta(p)$ in the Hardy inequality \eqref{eq} with respect to the non-magnetic case when adding AB magnetic fields $A_\beta$ with $\beta \not \in \mathbb{Z}$.
\begin{Rem}[Open problem]\label{rem2}  However,  a more 
	constructive proof with explicit estimates on the constant $\lambda_\beta(p)$ remains an open problem. A reasonable question that we could address is whether  $\lambda_\beta(p)$ is comparable with a quantity depending on 
	$\text{dist}\left(  \beta,\mathbb{Z}\right)$.  If yes, then it will match with the result in the case $p=2$. 
	\end{Rem}
The case $p=d=2$ is known to hold for test functions $u\in C_c^\infty(\rr^2\setminus\{0\})$ due to~\cite{LW}, 
where the optimal constant was identified with 
$\lambda_\beta(2) = \dist(\beta,\mathbb{Z})^2$.
The approach of~\cite{LW} is based on polar coordinates
and it is not clear how to generalise it for $p < 2$. However, some partial results were obtained in~\cite{A} where a compromise was done to get an improved explicit constant for the Aharonov--Bohm potential with respect to the free magnetic case. This is a mean value $L^p$ inequality for the magnetic gradient and its adjoint as follows.  
\begin{Th}[cf. Thm. 2.1.1, \cite{A}]\label{th3} Let $d=2$, $1< p< 2$ and let~$A_\beta$ be given by~\eqref{AB}. Then 
\begin{equation}\label{mean_value_ineq}
	\left(\frac{\|\nabla_{A_\beta} u \|_{L^p(\rr^2)}+\|\nabla_{A_\beta} \overline{u} \|_{L^p(\rr^2)}}{2}\right)^p \geq   \left(\frac{\sqrt{(2-p)^2+\beta^2 p^2}}{p}\right)^p \int_{\rr^2} \frac{|u|^p}{|x|^p} \dx
\end{equation}
for any 
$u\in C_c^\infty(\rr^2)$.
\end{Th}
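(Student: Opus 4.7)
The plan is to work in polar coordinates $(r,\theta)$ and exploit the fact that, when $1<p<2$, the Minkowski inequality in $L^{p/2}$ reverses, so that pointwise Pythagorean-type splittings of $|\nabla_{A_\beta}u|$ transfer into lower bounds on $\|\nabla_{A_\beta}u\|_p$. The mean-value structure on the left-hand side is then used to pair $u$ with $\bar u$ and to produce the factor $|\beta|$ explicitly via the identity $(\partial_\theta-i\beta)u-(\partial_\theta+i\beta)u=-2i\beta u$.

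First I would write $\nabla_{A_\beta}=\partial_r\hat r+r^{-1}(\partial_\theta-i\beta)\hat\theta$ in polar coordinates, giving
\[
|\nabla_{A_\beta}u|^2=|\partial_r u|^2+r^{-2}|(\partial_\theta-i\beta)u|^2,
\]
and, using $(\partial_\theta-i\beta)\bar u=\overline{(\partial_\theta+i\beta)u}$, the twin identity $|\nabla_{A_\beta}\bar u|^2=|\partial_r u|^2+r^{-2}|(\partial_\theta+i\beta)u|^2$, which shares the radial piece $|\partial_r u|$. Since $0<p/2<1$, the reverse Minkowski inequality $\|A+B\|_{L^{p/2}(\rr^2)}\geq \|A\|_{L^{p/2}(\rr^2)}+\|B\|_{L^{p/2}(\rr^2)}$ (for non-negative $A,B$), applied with $A=|\partial_r u|^2$ and $B=r^{-2}|(\partial_\theta\mp i\beta)u|^2$, yields
\[
\|\nabla_{A_\beta}u\|_p\geq\sqrt{\|\partial_r u\|_p^2+\|r^{-1}(\partial_\theta-i\beta)u\|_p^2},
\]
together with the analogous lower bound for $\|\nabla_{A_\beta}\bar u\|_p$ in which $-i\beta$ is replaced by $+i\beta$.

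Next I would combine these via the $\ell^2$-triangle inequality $\sqrt{\alpha^2+\gamma^2}+\sqrt{\alpha^2+\delta^2}\geq\sqrt{4\alpha^2+(\gamma+\delta)^2}$, which shares the common $\|\partial_r u\|_p$ and sums the two angular $L^p$-norms. The usual $L^p$ triangle inequality applied to $(\partial_\theta+i\beta)u-(\partial_\theta-i\beta)u=2i\beta u$ lower bounds that sum by $2|\beta|\,\||x|^{-1}u\|_p$, while slicing in $\theta$ and invoking the one-dimensional weighted Hardy inequality on $(0,\infty)$ with weight $r$ (the range $1<p<2$ makes the boundary term at $r=0$ vanish, so no condition on $u(0)$ is needed) gives $\|\partial_r u\|_p\geq((2-p)/p)\,\||x|^{-1}u\|_p$. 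Substituting both estimates under the square root yields $(\|\nabla_{A_\beta}u\|_p+\|\nabla_{A_\beta}\bar u\|_p)/2\geq\sqrt{((2-p)/p)^2+\beta^2}\,\||x|^{-1}u\|_p$, and raising to the $p$-th power produces exactly the claimed constant $((2-p)^2+\beta^2 p^2)^{p/2}/p^p$.

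The main obstacle is the reverse Minkowski step: it is available only because $p/2<1$, which is precisely the structural reason why the theorem is stated for $1<p<2$, and why the same pairing strategy breaks for $p\geq 2$, where the direction of Minkowski reverses back.
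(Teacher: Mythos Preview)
Your argument is correct and genuinely different from the paper's. The paper proceeds by a vector-field identity: for a real parameter $t$ it writes $(1+\beta t)\int |u|^p/|x|^p$ as the integral of a complex divergence against $|u|^p$, integrates by parts to produce $\bar u\,\nabla_{A_\beta}u+u\,\nabla_{A_\beta}\bar u$, applies Cauchy--Schwarz and H\"older, and then maximises the resulting coefficient $(1+\beta t)/\sqrt{(p/(2-p))^2+t^2}$ over $t$ to land on the constant. Your route instead splits $|\nabla_{A_\beta}u|^2$ orthogonally in polar coordinates, uses the reverse Minkowski inequality in $L^{p/2}$ (valid precisely because $p/2<1$) to push that pointwise splitting to the level of $L^p$-norms, and then combines the two resulting $\ell^2$-expressions via the $\ell^2$ triangle inequality before invoking the one-dimensional radial Hardy inequality (with weight $r$, sharp constant $(2-p)/p$, no vanishing condition at $r=0$ needed since $2-p>0$) and the $L^p$ triangle inequality on the angular pieces. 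Both routes exploit the same cancellation $(\partial_\theta+i\beta)u-(\partial_\theta-i\beta)u=2i\beta u$ that produces the $\beta$-dependence, but yours makes the structural role of the hypothesis $p<2$ completely transparent---it is exactly the range where Minkowski reverses---whereas in the paper's argument that restriction is hidden in the $1/(2-p)$ factor of the test vector field. The paper's method, on the other hand, avoids the polar decomposition entirely and stays closer to the classical divergence-theorem proof of the free Hardy inequality.
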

Notice that $|\nabla_{\!A_\beta} \bar{u}| = |\nabla_{\!-A_\beta} u|$,
but not $|\nabla_{\!A_\beta} \bar{u}| = |\nabla_{\!A_\beta} u|$ in general, unless $u$ is real valued test function. In this latter case inequality \eqref{mean_value_ineq} reduces to \eqref{eq} with 
$$\lambda_\beta(p)=\left(\frac{\sqrt{(2-p)^2+\beta^2 p^2}}{p}\right)^p$$
which is strictly larger than $\left(\frac{2-p}{p}\right)^p$ provided $\beta\neq 0$. Although this answers partially to Remark \ref{rem2} the general case  still remains open.  

As we have already mentioned Theorem \ref{th3}  was proved in \cite[Sec.~2.5]{A} by applying the divergence theorem combined with H\"{o}lder inequality against an arbitrary potential $F$  which was subsequently particularised and obtaining the result. For the sake of completeness we give a direct proof of Theorem \ref{th3} in Section \ref{sec4}.  Further extensions in higher dimensions  of inequality \eqref{mean_value_ineq} were recently obtained in \cite{LL}.

\begin{Rem} 	
 Inequality \eqref{mean_value_ineq} gets better for large $\beta$ (the right-hand side in  \eqref{mean_value_ineq} can be as large as one wants when $\beta $ becomes large)  comparing it with the ``proper" Hardy inequality when $p=2$ (cf. \cite{LW}), i.e.  
\begin{equation}\label{proper_mag_Hardy}
		\int_{\rr^2} |\nabla_{A_\beta} u|^2 \dx \geq     \dist(\beta,\mathbb{Z})^2 \int_{\rr^2} \frac{|u|^2}{|x|^2} \dx, \quad \forall u\in C_c^\infty(\rr^2\setminus\{0\}). 
\end{equation}
The way $\beta$ appears on the right-hand side of the inequality \eqref{mean_value_ineq} is a bit striking but in fact natural.  
	 The interesting part (gauge invariance, etc)
	of the magnetic field comes exactly from the cross terms when trying
	to ``develop" the $p$-powers of $|\nabla_{A_\beta} u|^p$ and $|\nabla_{A_\beta} \bar{u}|^p$. But this difficulty
	disappears (at least if $p=2$) when considering the mean
	value because the cross terms cancel out:
	$$\frac{|\nabla_{A_\beta} u|^2 + |\nabla_{A_\beta} \bar{u}|^2}{2}
	= |\nabla u|^2 + |\beta|^2 \frac{|u|^2}{|x|^2}.$$
	This implies 
	\begin{equation}\label{mean_value2} 
	\begin{aligned}
		   \frac{\|\nabla_{A_\beta} u \|_{L^2(\rr^2)}^2+\|\nabla_{A_\beta} \overline{u} \|_{L^2(\rr^2)}^2}{2} 
			 \geq |\beta|^2 \int_{\rr^2} \frac{|u|^2}{|x|^2} \dx .
	\end{aligned}
	\end{equation}
		
 Alternatively, we can also see \eqref{mean_value2} rapidly by considering the Fourier expansion of $u$ i.e. $u=u(r, \theta)=\sum_{n=-\infty}^{\infty} u_n(r)e^{i n \theta}$ for which we have (due to the Parseval identity)  
 \begin{equation*}
	\begin{aligned}
\lefteqn{	 
\frac{\|\nabla_{A_\beta} u \|_{L^2(\rr^2)}^2+\|\nabla_{A_\beta} \overline{u} \|_{L^2(\rr^2)}^2}{2} }
\\
	&= \frac{1}{2}\sum_{n=-\infty}^{\infty}  \int_{0}^\infty \int_{S^1} \left(2|u_n^\prime(r)|^2+ (|n-\beta|^2+|n+\beta|^2)  \frac{u_n^2(r)}{r^2}  \right) 
	r \mathrm{d} \theta \mathrm{d}r \\
	 &\geq \frac{1}{2} \sum_{n=-\infty}^{\infty} \int_{0}^\infty \frac{1}{r^2} \int_{S^1} (|n-\beta|^2+|n+\beta|^2)  u_n^2(r)  r \mathrm{d} \theta \mathrm{d}r\\
	 &\geq |\beta|^2 \sum_{n=-\infty}^{\infty} \int_{0}^\infty \frac{1}{r^2} \int_{S^1}  u_n^2(r)  r \mathrm{d} \theta \mathrm{d}r
	 \\
	 &=|\beta|^2 \int_\rr^2 \frac{|u|^2}{|x|^2} 
	 \dx. 
\end{aligned}	 
\end{equation*} 
Also, \eqref{mean_value2} yields  
	\begin{equation}\label{mean3}
	\begin{aligned}
		\left(\frac{\|\nabla_{A_\beta} u \|_{L^2(\rr^2)}+\|\nabla_{A_\beta} \overline{u} \|_{L^2(\rr^2)}}{2}\right)^2  
		&\geq \frac{\|\nabla_{A_\beta} u \|_{L^2(\rr^2)}^2+\|\nabla_{A_\beta} \overline{u} \|_{L^2(\rr^2)}^2}{4} \\
			&\geq \frac{|\beta|^2}{2} \int_{\rr^2} \frac{|u|^2}{|x|^2} \dx, \quad \forall u\in C_c^\infty(\rr^2\setminus\{0\}) .
	\end{aligned}
\end{equation}
Estimate \eqref{mean3} is more likely in the spirit of \eqref{mean_value_ineq} in the case $p=2$, although with a worse constant ($|\beta|^2/2$ instead of $|\beta|^2$) caused by the first inequality in \eqref{mean3} which is too rough. In any case, in constrast with \eqref{proper_mag_Hardy},  estimate \eqref{mean3} shows an improvement in the sharp constant immediately  which becomes greater as $|\beta|$ increases, even for the case $\beta\in \mathbb{Z}$. The same phenomenon occurs if $p \not= 2$, but it is less trivial.
\end{Rem}

\subsection*{Structure of the paper.}
The paper is organised as follows. In Section \ref{sec2} we briefly present some well-known aspects of free $p$-Laplacian and the magnetic-free Hardy inequality in $L^p$.  For the sake of completeness, we give some short proofs and sketch main ideas of inequality \ref{LpHardy} and Propositions \ref{critical}--\ref{prop2} pointing out some precise references, since they represent classical results frequently stated in the literature in a form or another. In Section \ref{sec3} we analyze the magnetic $p$-Laplacian for general smooth and closed magnetic fields and we prove Theorems \ref{main_th_1}-\ref{main_th_3}. We also show some very useful preliminary lemmas, i.e. Lemmas \ref{Hardy_outside_ball}-\ref{lema2}.     
Finally, in Section \ref{sec4} we are devoted to Aharonov-Bohm fields potentials and we mainly prove Theorem \ref{main_th_4} and a direct proof of Theorem \ref{th3}.

\section{The free $p$-Laplacian}\label{sec2}

 Before going through the main results and proofs, for the sake of clarity, in this section we discuss the proof of inequality \eqref{LpHardy} and sketch the proofs of Propositions \ref{critical}--\ref{prop2}. 
    
    \begin{proof}[\bf Short proof of inequality \eqref{LpHardy}]
    
    For the sake of completeness next we present a very short proof for complex-valued functions $u$ valid for any $p$, which is based on an integration-by-parts formula, 
    Cauchy--Schwarz and  H\"{o}lder inequalities.
Let $u\in C_c^{\infty}(\mathbb{R}^d\setminus \{0\})$
(this is enough by density arguments) and then we successively have   
    \begin{align*}
    \int_{\mathbb{R}^d} \frac{|u|^p}{|x|^p} \dx &= \frac{1}{d-p} \int_{\mathbb{R}^d} \textrm{div}\left(\frac{x}{|x|^p} \right) |u|^p \dx = -\frac{1}{d-p} \int_{\mathbb{R}^d} \frac{x}{|x|^p} \cdot \nabla ( |u|^p) \dx\\
    &=-\frac{p}{d-p} \int_{\mathbb{R}^d} |u|^{p-2}\frac{x}{|x|^p} \cdot \textrm{Re} (\overline{u}\nabla  u) \dx\\
    & \leq \frac{p}{d-p} \int_{\mathbb{R}^d} \frac{|u|^{p-1}}{|x|^{p-1}}|\nabla  u| \dx \leq \frac{p}{d-p}\left(\int_{\mathbb{R}^d} \frac{|u|^p}{|x|^p} \dx \right)^{1-1/p} \left(\int_{\mathbb{R}^d} |\nabla u|^p \dx \right)^{1/p}.
    \end{align*}
    Looking at the extreme terms above after raising the $p$-power we move the singular terms on the right-hand side and we get exactly \eqref{LpHardy}.  
    \end{proof}

\begin{proof}[\bf Proof of Proposition \ref{critical} (main ideas)]
By density arguments,	it is enough to build a sequence $\{u_\epsilon\}_{\epsilon>0}$ in $W^{1, p}(\rr^d)$ such that
	\begin{itemize}
		\item $\int_{\rr^d}|\nabla u_\epsilon|^p \dx \rightarrow 0$, as $\epsilon \searrow 0$;  \vspace{0.3cm}
		\item  $u_\epsilon \rightarrow 1$  \textrm{ a.e. } as $
		\epsilon\searrow 0$ and $|u_\eps|\leq 1$ a.e. in $\rr^d$. 
	\end{itemize}
By direct computations, one can check that the sequence $\{u_{\eps}\}_{\eps>0}\subset W^{1, p}(\rr^d)$  defined by 
	\begin{equation}
	\everymath={\displaystyle}	u_{\eps}(x)=\left\{\begin{array}{cc}
	1, & |x|\leq 1/\eps,\\[5pt]
	%\frac{\log (|x|/\eps^2) }{\log (1/\eps)}, & \eps^2\leq |x|\leq \eps, \\ [5pt]
	\frac{\log(1/(\eps^2 |x|))}{\log (1/\eps)}, & 1/\eps \leq |x|\leq 1/\eps^2,\\ [10pt]
	0, & \textrm{otherwise},  
	\end{array}\right.
	\end{equation}
satisfies both properties above. 
Then from Fatou lemma we have
$$0\leq \int_{\rr^d} V \dx \leq \liminf_{\eps \searrow 0} \int_{\rr^d} V| u_\eps|^p \dx \leq \liminf_{\eps \searrow 0} \int_{\rr^d} |\nabla u_\eps|^p \dx =0,$$
which forces $V=0$. 
So, the proof is completed.   
\end{proof}	
	
For alternative proofs of Proposition \ref{critical},
we refer for instance to  \cite[Ex.~1.7]{PTT} 
or more precisely to \cite[Thm.~2]{MP}.

\begin{proof}[\bf Proof of Proposition \ref{prop2} (sketch)]
	Let us first show that if inequality \eqref{ineq} holds it can be extended to functions $u\in W^{1, p}(\rr^d)$. Indeed, let $u\in W^{1, p}(\rr^d)$ and, by density considerations, let $\{u_n\}_n\subset C_c^\infty(\rr^d)$ such that $u_n\rightarrow u$ in $W^{1, p}(\rr^d)$  as $n\rightarrow\infty$. Particularly, we have 
	\begin{equation}\label{conv}
\left\{\begin{array}{ll}
	u_n \rightarrow u, & \textrm{ in } L^p(\rr^d),\\ 
	\nabla u_n \rightarrow \nabla u, & \textrm{ in } L^p(\rr^d),\\
	u_n \rightarrow u, & \textrm{ a.e. } x\in \rr^d.   
	\end{array}\right.
	\end{equation} 
	In view of Fatou lemma, \eqref{conv} and \eqref{ineq} applied to $u_n$ we successively have
	\begin{align*}
	\int_{\rr^d} V|u|^p \dx &+\mu_{p,d} \int_{\rr^d} \frac{|u|^p}{|x|^p}\dx\\
		 & \leq \liminf \int_{\rr^d}  V|u_n|^p \dx +\mu_{p,d} \liminf\int_{\rr^d}\frac{|u_n|^p}{|x|^p}\dx\\
	 & \leq \liminf\left(\int_{\rr^d}  V|u_n|^p \dx +\mu_{p,d} \int_{\rr^d}\frac{|u_n|^p}{|x|^p}\dx\right)\\
	 & \leq \liminf \int_{\rr^d}|\nabla u_n|^p \dx \\
	 &= \int_{\rr^d}|\nabla u|^p \dx. 
	\end{align*} 
	Next, let us consider the sequence  $\{u_{\eps}\}_{\eps>0}\subset W^{1, p}(\rr^d)$  defined by 
	$$u_\eps(x)=|x|^{-\frac{d-p}{p}}\theta_\eps(x), \quad \eps>0,$$ where $\theta_\eps$ is the sequence given by 
	\begin{equation}
\everymath={\displaystyle}	
\theta_{\eps}(x)=\left\{\begin{array}{cl}
	\frac{\log(|x|/\eps^2)}{\log(1/\eps)} 
	& \mbox{if} \quad \eps^2 \leq |x|\leq \eps, 
	\\[10pt]
	1 
	& \mbox{if} \quad \eps \leq|x|\leq 1/\eps,
	\\[5pt]
	%\frac{\log (|x|/\eps^2) }{\log (1/\eps)}, & \eps^2\leq |x|\leq \eps, \\ [5pt]
	\frac{\log(1/(\eps^2 |x|))}{\log (1/\eps)} 
	& \mbox{if} \quad 1/\eps \leq |x|\leq 1/\eps^2, 
	\\ [10pt]
	0 &  \textrm{otherwise}.  
	\end{array}\right.
	\end{equation}
	By direct computations or, alternatively, following the estimates in the proof of \cite[Thm.~1.3]{PS} we can show that 
		$$0\leq  \int_{\rr^d}|\nabla u_\eps|^p \dx - \mu_{p,d}\int_{\rr^d} \frac{|u_\eps|^{p}}{|x|^p} \dx \leq O\left(\frac{1}{\log \frac {1}{ \eps}}\right)\rightarrow 0, \textrm{ as } \eps\searrow 0.   $$
In consequence, since $\theta_{\eps} \rightarrow 1$ a.e. as $\eps \searrow 0$ we have

$$0\leq \int_{\rr^d} V|x|^{-(d-p)} \dx \leq \liminf_{\eps \searrow 0}\int_{\rr^d} V|u_\eps|^p \dx \leq \liminf_{\eps \searrow 0} O\left(\frac{1}{\log \frac {1}{ \eps}}\right)=0,$$ which forces $V=0$ a.e. in $\rr^d$. 
This concludes the proof of Proposition~\ref{prop2}.
\end{proof}

\section{The magnetic $p$-Laplacian}\label{sec3}

This section is concerned with the improved Hardy inequalities for the magnetic $p$-Laplace operator $-\Delta_{A, p}$ and it is mainly devoted to the proofs of Theorems \ref{main_th_1}-\ref{main_th_3}. 

\subsection{Preliminary lemmas}

	\begin{lema}\label{Hardy_outside_ball} For any $\tilde{R}>0$
	let $B_R(0)$ be the ball of radius $\tilde{R}$ centred at 0 in $\rr^d$,  $B_{\tilde{R}}^c(0)$ the exterior of the ball $B_{\tilde{R}}(0)$ and $1< p<\infty$. 
	\begin{enumerate}[(1)]
		\item\label{i0}  If $p\geq d$ then $$ \quad  \int_{B_{\tilde{R}}(0)} |\nabla u|^p \dx \geq  \left(\frac{p-1}{p}\right)^p \frac{1}{{\tilde{R}}^{p-d}} \int_{B_{\tilde{R}}(0)} \frac{|u|^p}{|x|^d\left(\log \frac{\tilde{R}}{|x|}\right)^p} \dx, \quad \forall u\in C_c^\infty(B_{\tilde{R}}(0)).$$
		\item\label{i0_1} If $p<d$ then  $$ \quad  \int_{B_{\tilde{R}}(0)} \left |\nabla \left(u|x|^\frac{d-p}{p}\right) \right|^p |x|^{p-d}\dx \geq  \left(\frac{p-1}{p}\right)^p  \int_{B_{\tilde{R}}(0)} \frac{|u|^p}{|x|^p\left(\log \frac{\tilde{R}}{|x|}\right)^p} \dx, \quad \forall u\in C_c^\infty(B_{\tilde{R}}(0)).$$
		\item\label{i1} If  $p\neq  d$ then $$ \quad \int_{B_{\tilde{R}}^c(0)} |\nabla u|^p \dx \geq |\mu_{p, d}| \int_{B_{\tilde{R}}^c(0)} \frac{|u|^p}{|x|^p} \dx, \quad \forall u\in C_c^\infty(B_{\tilde{R}}^c(0)). $$ 
		\item\label{i2} If $p=d$ then $$ \quad  \int_{B_{\tilde{R}}^c(0)} |\nabla u|^d \dx \geq  \left(\frac{d-1}{d}\right)^d \int_{B_{\tilde{R}}^c(0)} \frac{|u|^d}{|x|^d\left(\log \frac{\tilde{R}}{|x|}\right)^d} \dx, \quad \forall u\in C_c^\infty(B_{\tilde{R}}^c(0)).$$ 
			\item\label{i3} If $p\neq d$ then  $$ \quad  \int_{B_{\tilde{R}}^c(0)} \left |\nabla \left(u|x|^\frac{d-p}{p}\right) \right|^p |x|^{p-d}\dx \geq  \left(\frac{p-1}{p}\right)^p  \int_{B_{\tilde{R}}^c(0)} \frac{|u|^p}{|x|^p\left(\log \frac{\tilde{R}}{|x|}\right)^p} \dx, \quad \forall u\in C_c^\infty(B_{\tilde{R}}^c(0)).$$
	\end{enumerate}
\end{lema}

\begin{proof} Parts of this lemma have been already proved in the literature (see e.g.  \cite{Adi} for item \eqref{i0},  for $p=d$ and radially symmetric and non-decreasing functions). 	
	For the sake of clarity,	since our lemma has a more general character, we present the proof in what follows. 

Item \eqref{i0};   	
	Writing in spherical coordinates and 
	integrating by parts we get  
	\begin{align*}\label{log_ineq}
		\int_{B_{\tilde{R}}(0)} \frac{|u|^p}{|x|^d\left(\log \frac{\tilde{R}}{|x|}\right)^p} \dx &= \int\limits_{S^{d-1}}\int\limits_0^{\tilde{R}}   \frac{|u|^p}{r^d}  \left(\log \frac{\tilde{R}}{r}\right)^{-p} r^{d-1} dr d\sigma \nonumber\\
		&= -\frac{1}{1-p} \int\limits_{S^{d-1}}\int\limits_0^{\tilde{R}}  |u|^p  \partial_r  \left( \left(\log \frac{\tilde{R}}{r}\right)^{1-p}\right)  dr d\sigma \nonumber\\
		& = \frac{p}{1-p} \int\limits_{S^{d-1}}\int\limits_0^{\tilde{R}}  \operatorname{Re} |u|^{p-2} \overline{u} \partial_r u   \left(\log \frac{\tilde{R}}{r}\right)^{1-p}  dr d\sigma 	\end{align*}
	Then, by H\"{o}lder inequality we successively obtain
	\begin{align*}
		\int_{B_{\tilde{R}}(0)} & \frac{|u|^p}{|x|^d  \left(\log \frac{\tilde{R}}{|x|}\right)^p}  \dx \leq \frac{p}{p-1} \int\limits_{S^{d-1}}\int\limits_0^{\tilde{R}}  |u|^{p-1}  |\partial_r u|   \left(\log \frac{\tilde{R}}{r}\right)^{1-p}  dr d\sigma \\
		&\leq \frac{p}{p-1} \left(\int\limits_{S^{d-1}}\int\limits_0^{\tilde{R}} |\partial_r u|^p r^{d-1} dr d\sigma \right)^{\frac{1}{p}} \left(\int\limits_{S^{d-1}}\int\limits_0^{\tilde{R}} |u|^p r^{\frac{1-d}{p-1}}  \left(\log \frac{\tilde{R}}{r}\right)^{-p} dr d\sigma\right)^{\frac{p-1}{p}}\\
		& = \frac{p}{p-1} \left(\int_{B_{\tilde{R}}(0)} |\nabla u|^p \dx  \right)^{\frac{1}{p}} \left(\int_{B_{\tilde{R}}(0)} \frac{|u|^p}{|x|^{\frac{(d-1)p}{p-1}} \left(\log \frac{\tilde{R}}{|x|}\right)^{p} }  \dx \right)^{\frac{p-1}{p}}\\
		&\leq \frac{p}{p-1} \left(\int_{B_{\tilde{R}}(0)} |\nabla u|^p \dx  \right)^{\frac{1}{p}} R^{\frac{p-d}{p}}\left(\int_{B_{\tilde{R}}(0)} \frac{|u|^p}{|x|^d \left(\log \frac{\tilde{R}}{|x|}\right)^{p} }  \dx \right)^{\frac{p-1}{p}}.  
	\end{align*}
	Now,  the proof ends up by removing the $L^p$ log-weighted term on the rhs  and then raising the inequality to the power $p$. 
	
	Item \eqref{i0_1}; With the transformation $w=u|x|^\frac{d-p}{p}$ (which implies $w\in C_c(B_{\tilde{R}}(0))\cap C^\infty(B_{\tilde{R}}(0)\setminus \{0\})$, $w(0)=0$) it is enough to show that 
	
	$$ \quad  \int_{B_{\tilde{R}}(0)} \left |\nabla w \right|^p |x|^{p-d}\dx \geq  \left(\frac{p-1}{p}\right)^p  \int_{B_{\tilde{R}}(0)} \frac{|w|^p}{|x|^d\left(\log \frac{\tilde{R}}{|x|}\right)^p} \dx $$
	
	Indeed, proceeding as in item \eqref{i0} we get 
	
	\begin{align*}
		\int_{B_{\tilde{R}}(0)} &
		 \frac{|w|^p}{|x|^p  \left(\log \frac{\tilde{R}}{|x|}\right)^p} 
		 = \frac{p}{p-1} \int\limits_{S^{d-1}}\int\limits_0^{\tilde{R}}  \operatorname{Re} |w|^{p-2} \overline{w} \partial_r w   \left(\log \frac{\tilde{R}}{r}\right)^{1-p}  dr d\sigma \\
		&  \leq \frac{p}{p-1} \int\limits_{S^{d-1}}\int\limits_0^{\tilde{R}}  |w|^{p-1}  |\partial_r w|   \left(\log \frac{\tilde{R}}{r}\right)^{1-p}  dr d\sigma \\
		&\leq \frac{p}{p-1} \left(\int\limits_{S^{d-1}}\int\limits_0^{\tilde{R}} |\partial_r w|^p r^{p-1} dr d\sigma \right)^{\frac{1}{p}} \left(\int\limits_{S^{d-1}}\int\limits_0^{\tilde{R}} |w|^p   \left(\log \frac{\tilde{R}}{r}\right)^{-p} \frac{1}{r} dr d\sigma\right)^{\frac{p-1}{p}}\\
		& = \frac{p}{p-1} \left(\int_{B_{\tilde{R}}(0)} \left|\nabla w\right|^p |x|^{p-d} \dx  \right)^{\frac{1}{p}} \left(\int_{B_{\tilde{R}}(0)} \frac{|w|^p}{|x|^{d} \left(\log \frac{\tilde{R}}{|x|}\right)^{p} }  \dx \right)^{\frac{p-1}{p}}.\\
	\end{align*}
	Now,  the proof follows  by removing the $L^p$ log-weighted term on the rhs and then raising the inequality to the power $p$. 
	
	Item \eqref{i1}; The proof of this item  mimics perfectly the proof of \eqref{LpHardy} in Section \ref{sec2}. 
	
	Item \eqref{i2}; The proof of this  item  mimics perfectly the proof of the first item \eqref{i0} in the case $p=d$. Alternatively,  we can apply item \eqref{i0} and we proceed with the transformation which maps the ball $B_R(0)$ to its exterior, i.e.  
	$$B_R(0)\mapsto B_R^c(0), \qquad x\mapsto y, \quad y=\frac{x}{|x|^2}R^2.$$
	Computing the metric $G=(G_{\alpha \beta})_{\alpha, \beta=1,d}$ induced by the Jacobian matrix $\left[\frac{\partial y^k}{\partial x^l}\right]_{k,l=1, d}$ of the above transformation we obtain that 
	$$G_{\alpha\beta}=\frac{\partial y^k}{\partial x^\alpha} \frac{\partial y^k}{\partial x^\beta}=\frac{R^4}{|x|^4}\delta_{\alpha \beta}.$$ 
	Then we obtain the determinant    
	$|G|:=\det(G)=(R/|x|)^{4d}$ and  the Jacobian of the transformation is 
	$J:=|G|^{1/2}=(R/|x|)^{2d}$. Denoting $u(y)=u\left(R\frac{x}{|x|^2}\right)=v(x)$ we get 
	$$|\nabla_y u(y)|^2 = \frac{\partial v}{\partial x^\alpha} G^{\alpha \beta} \frac{\partial v}{\partial x^\beta} =\frac{|x|^4}{R^4} |\nabla_x v(x)|^2.$$
	Therefore it is easy to notice that 
	$$\int_{B_R^c(0)} |\nabla u(y)|^d \, \dy = \int_{B_R(0)} |\nabla v(x)|^d \dx, $$
	and 
	$$\int_{B_R^c(0)} \frac{|u(y)|^d}{|y|^d  \left(\log \frac{|y|}{R}\right)^d} \, \dy =\int_{B_R(0)} \frac{|v(x)|^d}{|x|^d \left(\log \frac{R}{|x|}\right)^d}\dx.$$
	Hence we can apply item \eqref{i0} in the ball $B_R(0)$ and then transfer it outside of the ball. 
	
	Item \eqref{i3}; The proof mimics perfectly the proof of item \eqref{i0_1}. 
\end{proof}

%\begin{proof}[\bf Alternative proof of item \eqref{i2}, Lemma \ref{Hardy_outside_ball}]
%We apply  \cite[Thm. 2.1]{AH} which could be trivially restricted to any domain in $\rr^d$, particularly to $B_R^c(0)$. Then we apply it for $v(x)=\left(\log \frac{|x|}{R}\right)^{\alpha}$ where $\alpha$ and the potential $V$ will be specified later. 
%	By direct computations we get 
%	$$-\Delta_p v =\alpha(\alpha-1)(d-1)|\alpha|^{d-2}{\left(\log \frac{|x|}{R}\right)^{(\alpha-1)(d-1)-1}}\frac{1}{|x|^d}. $$
%	Then we obtain
%	$$\frac{-\Delta_p v}{v^{p-1}}=\alpha(\alpha-1)(d-1)|\alpha|^{d-2}\frac{1}{|x|^d\left(\log \frac{|x|}{R}\right)^{d}}.$$
%	Choosing $\alpha=\frac{d-1}{d}$ we get $V=\left(\frac{d-1}{d}\right)^d\frac{1}{|x|^d\left(\log \frac{|x|}{R}\right)^{d}}$ and the proof is finished.
%\end{proof}

\begin{lema}\label{lema_compact}
	Let $d\geq 2$ and $1< p<\infty$. Assume also that $B\neq 0$ and let $A$ be such that $B=dA$.  Let $R>1$ be fixed and consider the annular domain $\Omega_R:=B_R(0)\setminus B_{\frac{1}{R}}(0)$. Then we define 
	\begin{equation}\label{quotient}
		\mu_B(R):=\inf_{u\in W^{1, p}(\Omega_R), u\neq 0} \frac{\int_{\Omega_R}|(\nabla +i A)u|^p \dx}{\int_{\Omega_R}|u|^p \dx }.
	\end{equation}  
	Then $\mu_B\neq 0$ on $(1, \infty)$.  
\end{lema}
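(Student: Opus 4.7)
The plan is to argue by contradiction. Suppose $\mu(R)=0$ and choose a sequence $\{u_n\}\subset W^{1,p}(B_R(0))$ normalized by $\|u_n\|_{L^p(B_R(0))}=1$ and with $\|(\nabla+iA)u_n\|_{L^p(B_R(0))}\to 0$.

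First I would extract compactness for the moduli via the pointwise diamagnetic inequality~\eqref{diamagnetic}: the bound $|\nabla|u_n||\le|(\nabla+iA)u_n|$ a.e.\ shows that $\{|u_n|\}$ is bounded in $W^{1,p}(B_R(0))$ with $\nabla|u_n|\to 0$ in $L^p$. Rellich--Kondrachov then gives, along a subsequence, $|u_n|\to f$ strongly in $L^p(B_R(0))$ with $f\in W^{1,p}(B_R(0))$, $\nabla f=0$ and $\|f\|_{L^p}=1$; by connectedness of $B_R(0)$, the limit $f$ must be a positive constant~$c>0$.

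Next, I would upgrade this to compactness of $u_n$ itself. Smoothness of $A$ (in particular boundedness on $\overline{B_R(0)}$) yields
$$
\|\nabla u_n\|_{L^p(B_R(0))}\le\|(\nabla+iA)u_n\|_{L^p(B_R(0))}+\|A\|_{L^\infty(B_R(0))}\|u_n\|_{L^p(B_R(0))},
$$
so $\{u_n\}$ is bounded in $W^{1,p}(B_R(0);\mathbb{C})$. Passing to a further subsequence, $u_n\rightharpoonup u$ weakly in $W^{1,p}$ and strongly in $L^p$, giving $|u|\equiv c$ a.e.; weak lower semicontinuity of the $L^p$ norm applied to $(\nabla+iA)u_n$ then forces $(\nabla+iA)u=0$ in $B_R(0)$.

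The final step contradicts $B\neq 0$. From $\nabla u=-iAu$ with $|u|\equiv c$ and $A\in L^\infty$ one gets $u\in W^{1,\infty}(B_R(0))$, hence continuous. Since $B_R(0)$ is simply connected, the continuous unimodular function $u/c$ lifts to a continuous phase $\phi$ with $u=c\,e^{i\phi}$; the relation $\nabla u=-iAu$ then reads $\nabla\phi=-A$, so $\phi$ inherits the smoothness of $A$ and $B=dA=-d(d\phi)=0$ throughout $B_R(0)$, contradicting the hypothesis~$B\neq 0$ (understood, as required by the application to Theorem~\ref{main_th_1}, inside the ball~$B_R(0)$). The step I expect to be the main obstacle is the regularization/lifting of $u$: the weak $W^{1,p}$ information alone does not allow one to produce a single-valued phase, but the first-order equation $(\nabla+iA)u=0$ combined with $|u|\equiv c>0$ promotes $u$ to Lipschitz regularity, after which simple connectedness of the ball permits a genuinely pointwise lifting. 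Once $u=c\,e^{i\phi}$ is in hand, the closedness of $B$ is not even needed.
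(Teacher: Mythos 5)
Your argument is correct and follows essentially the same route as the paper: use the diamagnetic inequality and the compactness of $W^{1,p}(B_R(0))\hookrightarrow L^p(B_R(0))$ to produce a nontrivial $u$ with $(\nabla+iA)u=0$ and constant modulus, then represent $u$ as a phase to conclude $A=-\nabla\phi$ on the ball, hence $B=0$, a contradiction; you merely supply details the paper asserts without proof (minimizing sequence and weak lower semicontinuity instead of claiming the infimum is attained, and the Lipschitz upgrade plus lifting on the simply connected ball instead of postulating a smooth $\varphi$ with $g=e^{i\varphi}$). Your closing caveat that the contradiction really requires $B\not\equiv 0$ on $B_R(0)$ is a fair observation, but it applies equally to the paper's own proof, so it does not distinguish the two arguments.
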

\begin{proof}
	First we point out that $\mu_B(R)$ is achieved by, say a function $g\in W^{1, p}(\Omega_R)$.  In order to show that, let us consider a sequence $\{u_n\}_n\subset W^{1, p}(\Omega_R)$ such that 
	$$\|u_n\|_{L^p(\Omega_R)}=1, \quad \int_{\Omega_R}|\nabla_A u_n|^p \dx \searrow \mu_B(R), \quad n\to \infty.$$ 
	Since $A$ is bounded on $\Omega_R$ then the sequence $\{v_n\}_n$, given by $v_n:= 
	|u_n|$, is bounded in $W^{1, p}(\Omega_R)$. The fact that $W^{1, p}(\Omega_R)$ is compactly embedded in $L^p(\Omega_R)$ implies that there exists $v\in W^{1, p}(\Omega_R)$ such that 
	\begin{equation*}
		\left\{\begin{array}{cc}
			v_n \rightharpoonup g  & \mathrm{ weakly\  in\  } W^{1, p}(\Omega_R),\\
			v_n \rightarrow g  & \mathrm{ strongly\  in\  } L^p(\Omega_R).\\
		\end{array}\right.
	\end{equation*}
	Then we get $\|g\|_{L^p(\Omega_R)}=1$ and $\partial_{x_j} v_n\rightharpoonup \partial_{x_j} g$ in $L^p(\Omega_R)$ for any $j=1, d$. Applying the weakly lower semi-continuity property of the $L^p$ norm (see, e.g. \cite[Section 3.1, pag. 90]{D}) and the diamagnetic inequality we get 
	\begin{align*}
		\mu_B(R)\leq \|\nabla g\|_{L^p(\Omega_R)}^p \leq \liminf_{n\to\infty} \|\nabla v_n\|_{L^p(\Omega_R)}^p  \leq \lim_{n\to \infty} \int_{\Omega_R}|\nabla_A u_n|^p \dx =\mu_B(\Omega_R). 
	\end{align*}
	So, $\mu(R)$ is attained by a non-trivial $g$.  
	
	Assume that $\mu_B(R)=0$ for any $R>1$. Then $\|(\nabla+i A)g\|_{L^p(\Omega_R)}=0$. On the other hand, from the diamagnetic inequality this leads to $$0=|(\nabla+i A)g(x)|\geq |\nabla |g(x)||\geq 0, \quad \textrm{ a.e. } x\in \Omega_R,$$
	which implies that $\nabla |g|=0$ a.e. in $\Omega_R$. We obtain that $|g|=g_0=\textrm{constant}$. Without losing the generality we may assume that $g_0$=1. Let $\varphi$ be a smooth function such that $g=e^{i\varphi}$. Since $\nabla_A g=0$ we have $\nabla_A(e^{i\varphi})=0$ which is equivalent with $(i\nabla \varphi +i A)e^{i\varphi}=0$. Therefore, $-\nabla \varphi=A$ on $\Omega_R$ for any $R>1$, which implies,by letting $R\to \infty$, that $A$ is exact on the punctured space $\rr^d\setminus\{0\}$. Since $B$ is smooth hence $B=0$ on $\rr^d$. Contradiction. The proof is completed. 
\end{proof}

\subsection{Proof of Theorem \ref{main_th_1}}\label{sec_th1}

	Roughly speaking,  it is based on Lemma \ref{Hardy_outside_ball}, Lemma \ref{lema_compact}, a cut-off argument and the diamagnetic inequality.

Let us fix a constant $R>1$ such that $\mu_B(R)>0$ (this is possible in view of Lemma \ref{lema_compact}). 

Next we introduce a radially symmetric cut-off function $\eta\in C^\infty(\rr^d)$ with $0\leq \eta \leq 1$ such that $\eta\equiv 1$ on $\Omega_R$ and $\eta \equiv 0$ on $B_{R_2}(0)\setminus B_{R_1}(0)$, where  $R_1, R_2$ are two constants such that $\frac{1}{R}< R_1< 1< R_2< R$.   Therefore, we have that 
$\mathrm{supp}(|\nabla \eta|)\subset \Omega_R$ and  $\mathrm{supp}(1-\eta)\subset \Omega_R$.

Then we successively have
\begin{align}\label{est1}
\int_{\rr^d} &\frac{|u|^p}{|x|^d(|\log |x||^p +|x|^{p-d})}  \dx\nonumber\\
& =\int_{\rr^d} \frac{|(1-\eta)u+\eta u|^p}{|x|^d(|\log |x||^p +|x|^{p-d})} \dx \nonumber\\
&\leq 2^{p-1} \left(\int_{\rr^d} \frac{|(1-\eta) u|^p}{|x|^d(|\log |x||^p +|x|^{p-d})} \dx + \int_{\rr^d} \frac{|\eta u|^p}{|x|^d(|\log |x||^p +|x|^{p-d})} \dx \right)\nonumber\\
& = 2^{p-1} \Bigg(\int_{\Omega_R} \frac{|(1-\eta) u|^p}{|x|^d(|\log |x||^p +|x|^{p-d})} \dx + \int_{B_1(0)} \frac{|\eta u|^p}{|x|^d(|\log |x||^p +|x|^{p-d})} \dx\nonumber\\
& + \int_{B_1^c(0)} \frac{|\eta u|^p}{|x|^d(|\log |x||^p +|x|^{p-d})} \dx\Bigg)\nonumber\\
&:=2^{p-1}(I_1(u)+I_2(u)+I_3(u)). 
\end{align} 
First we have from Lemma \ref{lema_compact}: 
\begin{align}\label{est5}
	I_1(u)& \leq \int_{\Omega_R} \frac{|(1-\eta) u|^p}{|x|^{p}} \dx
	 \leq R^p \int_{\Omega_R} |u|^p \dx  \leq \frac{R^p}{\mu_B(R)} \int_{\Omega_R} |\nabla_A u|^p \dx.
\end{align}
By Lemma \ref{Hardy_outside_ball}, item \eqref{i0} with the choice  $\tilde{R}=1$ we have 
\begin{align}\label{est4}
	I_2(u)& \leq \int_{B_1(0)} \frac{|\eta u|^p}{|x|^d|\log |x||^p}\dx \nonumber\\
	& \leq  \left(\frac{p}{p-1}\right)^p \int_{B_1(0)}|\nabla (\eta |u|)|^p \dx \nonumber\\
	& \leq 2^{p-1} \left(\frac{p}{p-1}\right)^p \int_{B_1(0)}\left( |\nabla \eta|^p|u|^p + |\eta \nabla (|u|)|^p \right)\dx \nonumber\\
	& \leq 2^{p-1} \left(\frac{p}{p-1}\right)^p   \left( \|\nabla \eta\|_{L^\infty(\rr^d)}^p \int_{B_1(0)\setminus B_{\frac{1}{R}}(0)} |u|^p\dx  + \int_{B_1(0)}|  \nabla (|u|)|^p \dx  \right).\nonumber\\
	& \leq 2^{p-1} \left(\frac{p}{p-1}\right)^p   \left( \|\nabla \eta\|_{L^\infty(\rr^d)}^p \int_{\Omega_R(0)} |u|^p\dx  + \int_{B_1(0)}|  \nabla (|u|)|^p \dx  \right).
\end{align}
Applying Lemma \ref{lema_compact} and the diamagnetic inequality in \eqref{est4} we get 
\begin{align}\label{est7}
I_2(u) & \leq 2^{p-1} \left(\frac{p}{p-1}\right)^p   \left(  \frac{\|\nabla \eta\|_{L^\infty(\rr^d)}^p}{\mu_B(R)}\int_{\Omega_R} |\nabla_A u|^p\dx  + \int_{B_1(0)}|  \nabla_A u|^p \dx  \right) 
\end{align}
Items \eqref{i1}-\eqref{i2} in Lemma \ref{Hardy_outside_ball} and diamagnetic inequality lead to
\begin{align}\label{est6}
	I_3(u) & \leq \left\{\begin{array}{cc}
		\int_{B_1^c(0)} \frac{|u|^p}{|x|^d|\log |x||^p} \dx,  & p=d\\ [10pt]
				\int_{B_1^c(0)} \frac{|u|^p}{|x|^p} \dx,  & p>d\\ [10pt]
\end{array}\right. \nonumber\\
& \leq \left\{\begin{array}{cc}
	\left(\frac{p}{p-1}\right)^p \int_{B_1^c(0)} |\nabla_A u|^p \dx ,  & p=d\\ [10pt]
		\left(\frac{p}{p-d}\right)^p \int_{B_1^c(0)} |\nabla_A u|^p \dx ,  & p>d.\\ [10pt]
	\end{array}\right.
\end{align}
Combining \eqref{est1}, \eqref{est5}, \eqref{est6}, \eqref{est7} we finally obtain 
\begin{equation*}
	\int_{\rr^d} \frac{|u|^p}{|x|^d(|\log |x||^p +|x|^{p-d})}  \dx \leq C_{B, p, d}\int_{\rr^d} |\nabla_A u|^p \dx, 
	\end{equation*}
where 
\begin{equation*}
	C_{B, p, d} = \left\{\begin{array}{cc}
		2^{p-1}\left(\frac{R^p}{\mu_B(R)}+2^{p-1} \left(\frac{p}{p-1}\right)^p   \left(  \frac{\|\nabla \eta\|_{L^\infty(\rr^d)}^p}{\mu_B(R)}+1\right)+\left(\frac{p}{p-1}\right)^p\right) ,  & p=d\\ [10pt]
		2^{p-1}\left(\frac{R^p}{\mu_B(R)}+2^{p-1} \left(\frac{p}{p-1}\right)^p   \left(  \frac{\|\nabla \eta\|_{L^\infty(\rr^d)}^p}{\mu_B(R)}+1\right)+\left(\frac{p}{p-d}\right)^p\right),  & p>d.\\ [10pt]
	\end{array}\right.
\end{equation*}
The proof of Theorem \ref{main_th_1} is finished now. 
$\hfill$	$\square$

\subsection{Proof of Theorem \ref{main_th_2}} 

The proof is divided in three steps. \\

\textit{Step 1.} We prove the following identity:

\begin{lema}\label{lema1}
	Let $1<p<d$. We have for all complex-valued functions $u\in C_{c}^{\infty
	}(\mathbb{R}^{d})  $ that
	\begin{equation}\label{ident1}
		\int\limits_{\mathbb{R}^{d}}\left\vert \nabla u\right\vert ^{p}dx-\left(
		\frac{d-p}{p}\right)  ^{p}\int\limits_{\mathbb{R}^{d}}\frac{\left\vert
			u\right\vert ^{p}}{\left\vert x\right\vert ^{p}}dx=\int\limits_{\mathbb{R}%
			^{d}}C_{p}\left(  \nabla u,\left\vert x\right\vert ^{-\frac{d-p}{p}}%
		\nabla\left(  u\left\vert x\right\vert ^{\frac{d-p}{p}}\right)  \right)  \dx,
	\end{equation}
	where the function $C_p(\cdot, \cdot)$ si given by 
	\begin{equation}\label{C_p(x,y)}
		C_{p}\left(  x,y\right): =\left\vert x\right\vert ^{p}-\left\vert
		x-y\right\vert ^{p}-p\left\vert x-y\right\vert ^{p-2}\operatorname{Re}\left(
		x-y\right)  \cdot\overline{y}.
	\end{equation}
\end{lema}

\begin{proof}
	With the transformation $u(x)=w(x)\varphi(|x|)$ and $\varphi(r)=r ^{-\frac
		{d-p}{p}}$ we have %
	\begin{align*}
		&  \int\limits_{\mathbb{R}^{d}}C_{p}\left(  \nabla u,\left\vert x\right\vert
		^{-\frac{d-p}{p}}\nabla\left(  u\left\vert x\right\vert ^{\frac{d-p}{p}%
		}\right)  \right)  \dx\\
		&  =\int\limits_{\mathbb{R}^{d}}C_{p}\left(  \nabla u,\varphi(|x|)\nabla v\right)
		\dx\\
		&  =\int\limits_{\mathbb{R}^{d}}\left\vert \nabla u\right\vert ^{p}%
		\dx-\int\limits_{\mathbb{R}^{d}}\left\vert w\varphi^{\prime}(|x|)\right\vert
		^{p}\dx\\
		&  -p\int\limits_{\mathbb{R}^{d}}\left\vert w\varphi^{\prime}(|x|)\right\vert
		^{p-2}\operatorname{Re}w\varphi^{\prime}(|x|)\frac{x}{\left\vert x\right\vert
		}\cdot\varphi(|x|)\nabla\overline{w}\dx
	\end{align*}
	Switching to spherical coordinates, we have
	\begin{align*}
	I:=	&  -\int\limits_{\mathbb{R}^{d}}\left\vert w\varphi^{\prime}(|x|)\right\vert
		^{p-2}\operatorname{Re}w\varphi^{\prime}(|x|)\frac{x}{\left\vert x\right\vert
		}\cdot\varphi(|x|)\nabla\overline{w}\dx\\
		&  =-\int\limits_{S^{d-1}}\int\limits_{0}^{\infty}\operatorname{Re}\left\vert
		w\varphi^{\prime}\right\vert ^{p-2}w\varphi^{\prime}\varphi\partial
		_{r}\overline{w}r^{d-1}drd\sigma\\
		&  =\int\limits_{S^{d-1}}\int\limits_{0}^{\infty}\operatorname{Re}\partial
		_{r}\left(  \left\vert w\varphi^{\prime}\right\vert ^{p-2}w\varphi^{\prime
		}\varphi r^{d-1}\right)  \overline{w}drd\sigma\\
		&  =\int\limits_{S^{d-1}}\int\limits_{0}^{\infty}\operatorname{Re}\partial
		_{r}\left(  \left\vert w\right\vert ^{p-2}w\varphi\left\vert \varphi^{\prime
		}\right\vert ^{p-2}\varphi^{\prime}r^{d-1}\right)  \overline{w}drd\sigma\\
		&  =\int\limits_{S^{d-1}}\int\limits_{0}^{\infty}\operatorname{Re}\overline
		{w}\left(  p-1\right)  \left\vert w\right\vert ^{p-2}\partial_{r}%
		w\varphi\left\vert \varphi^{\prime}\right\vert ^{p-2}\varphi^{\prime}%
		r^{d-1}drd\sigma\\
		&  +\int\limits_{S^{d-1}}\int\limits_{0}^{\infty}\operatorname{Re}\overline
		{w}\left\vert w\right\vert ^{p-2}w\varphi^{\prime}\left\vert \varphi^{\prime
		}\right\vert ^{p-2}\varphi^{\prime}r^{d-1}drd\sigma\\
		&  +\int\limits_{S^{d-1}}\int\limits_{0}^{\infty}\operatorname{Re}\overline
		{w}\left\vert w\right\vert ^{p-2}w\varphi\left(  \left\vert \varphi^{\prime
		}\right\vert ^{p-2}\varphi^{\prime}r^{d-1}\right)  ^{\prime}drd\sigma.\\
	&= (1-p)I + \int\limits_{S^{d-1}}\int\limits_{0}^{\infty}\left\vert w\varphi^{\prime
	}\right\vert ^{p}r^{d-1}drd\sigma  +\int\limits_{S^{d-1}}\int\limits_{0}^{\infty}\operatorname{Re}\overline
{w}\left\vert w\right\vert ^{p-2}w\varphi\left(  \left\vert \varphi^{\prime
}\right\vert ^{p-2}\varphi^{\prime}r^{d-1}\right)  ^{\prime}drd\sigma.
	\end{align*}
	That is%
	\begin{align*}
		&  p I-\int\limits_{S^{d-1}}\int\limits_{0}^{\infty}\left\vert w\varphi^{\prime
		}\right\vert ^{p}r^{d-1}drd\sigma \\
		&  =\int\limits_{S^{d-1}}\int\limits_{0}^{\infty}\operatorname{Re}\overline
		{w}\left\vert w\right\vert ^{p-2}w\varphi\left(  \left\vert \varphi^{\prime
		}\right\vert ^{p-2}\varphi^{\prime}r^{d-1}\right)  ^{\prime}r^{d-1}drd\sigma\\
		&  =\int\limits_{S^{d-1}}\int\limits_{0}^{\infty}\left\vert w\right\vert
		^{p}\varphi\left(  \left\vert \varphi^{\prime}\right\vert ^{p-2}%
		\varphi^{\prime}r^{d-1}\right)  ^{\prime}drd\sigma\\
		&  =-\int\limits_{S^{d-1}}\int\limits_{0}^{\infty}\left\vert w\right\vert
		^{p}\left\vert \varphi\right\vert ^{p}\frac{1}{r^{p}}\left(  \frac{d-p}%
		{p}\right)  ^{p}r^{d-1}drd\sigma
	\end{align*}
	Therefore, identity \eqref{ident1} is finally obtained. 
	%	$\hfill$ $\square$
\end{proof}

\textit{Step 2.} We extend the identity in Lemma \ref{lema1} to magnetic gradients:  

\begin{lema}\label{lema2}
	Let $1<p<d$. We have for all complex-valued functions $u\in C_{0}^{\infty
	}\left(  \mathbb{R}^{d}\right)  $ that
	\[
	\int\limits_{\mathbb{R}^{d}}\left\vert \nabla_{A}u\right\vert ^{p}dx-\left(
	\frac{d-p}{p}\right)  ^{p}\int\limits_{\mathbb{R}^{d}}\frac{\left\vert
		u\right\vert ^{p}}{\left\vert x\right\vert ^{p}}dx=\int\limits_{\mathbb{R}%
		^{d}}C_{p}\left(  \nabla_{A}u,\left\vert x\right\vert ^{-\frac{d-p}{p}}%
	\nabla_{A}\left(  u\left\vert x\right\vert ^{\frac{d-p}{p}}\right)  \right)
	\dx.
	\]
	
\end{lema}

\begin{proof}
	It is enough to show that
	\[
	\left\vert \nabla_{A}u\right\vert ^{p}-C_{p}\left(  \nabla_{A}u,\left\vert
	x\right\vert ^{-\frac{d-p}{p}}\nabla_{A}\left(  u\left\vert x\right\vert
	^{\frac{d-p}{p}}\right)  \right)  =\left\vert \nabla u\right\vert ^{p}%
	-C_{p}\left(  \nabla u,\left\vert x\right\vert ^{-\frac{d-p}{p}}\nabla\left(
	u\left\vert x\right\vert ^{\frac{d-p}{p}}\right)  \right)  .
	\]
	Note that $\left\vert x\right\vert ^{p}-C_{p}\left(  x,y\right)  =\left\vert
	x-y\right\vert ^{p}+p\left\vert x-y\right\vert ^{p-2}\operatorname{Re}\left(
	x-y\right)  \cdot\overline{y}$, we have
	\begin{align*}
		&  \left\vert \nabla u\right\vert ^{p}-C_{p}\left(  \nabla u,\left\vert
		x\right\vert ^{-\frac{d-p}{p}}\nabla\left(  u\left\vert x\right\vert
		^{\frac{d-p}{p}}\right)  \right) \\
		&  =\left\vert \nabla u-\left\vert x\right\vert ^{-\frac{d-p}{p}}\nabla\left(
		u\left\vert x\right\vert ^{\frac{d-p}{p}}\right)  \right\vert ^{p}\\
		&  +\left\vert \nabla u-\left\vert x\right\vert ^{-\frac{d-p}{p}}\nabla\left(
		u\left\vert x\right\vert ^{\frac{d-p}{p}}\right)  \right\vert ^{p-2}%
		\operatorname{Re}\left(  \nabla u-\left\vert x\right\vert ^{-\frac{d-p}{p}%
		}\nabla\left(  u\left\vert x\right\vert ^{\frac{d-p}{p}}\right)  \right)
		\cdot\left\vert x\right\vert ^{-\frac{d-p}{p}}\nabla\left(  \overline
		{u}\left\vert x\right\vert ^{\frac{d-p}{p}}\right)
	\end{align*}
	Similarly%
	\begin{align*}
		&  \left\vert \nabla_{A}u\right\vert ^{p}-C_{p}\left(  \nabla_{A}u,\left\vert
		x\right\vert ^{-\frac{d-p}{p}}\nabla_{A}\left(  u\left\vert x\right\vert
		^{\frac{d-p}{p}}\right)  \right) \\
		&  =\left\vert \nabla_{A}u-\left\vert x\right\vert ^{-\frac{d-p}{p}}\nabla
		_{A}\left(  u\left\vert x\right\vert ^{\frac{d-p}{p}}\right)  \right\vert
		^{p}\\
		&  +\left\vert \nabla_{A}u-\left\vert x\right\vert ^{-\frac{d-p}{p}}\nabla
		_{A}\left(  u\left\vert x\right\vert ^{\frac{d-p}{p}}\right)  \right\vert
		^{p-2}\operatorname{Re}\left(  \nabla_{A}u-\left\vert x\right\vert
		^{-\frac{d-p}{p}}\nabla_{A}\left(  u\left\vert x\right\vert ^{\frac{d-p}{p}%
		}\right)  \right)  \cdot\left\vert x\right\vert ^{-\frac{d-p}{p}}\nabla
		_{A}\left(  \overline{u}\left\vert x\right\vert ^{\frac{d-p}{p}}\right)  .
	\end{align*}
	We note that%
	\begin{align*}
		&  \nabla_{A}u-\left\vert x\right\vert ^{-\frac{d-p}{p}}\nabla_{A}\left(
		u\left\vert x\right\vert ^{\frac{d-p}{p}}\right) \\
		&  =\nabla u+iAu-\left\vert x\right\vert ^{-\frac{d-p}{p}}\left(
		\nabla\left(  u\left\vert x\right\vert ^{\frac{d-p}{p}}\right)  +iAu\left\vert
		x\right\vert ^{\frac{d-p}{p}}\right) \\
		&  =\nabla u-\left\vert x\right\vert ^{-\frac{d-p}{p}}\nabla\left(
		u\left\vert x\right\vert ^{\frac{d-p}{p}}\right)  .
	\end{align*}
	Therefore, we just need to show that
	\[
	\operatorname{Re}\left(  \nabla u-\left\vert x\right\vert ^{-\frac{d-p}{p}%
	}\nabla\left(  u\left\vert x\right\vert ^{\frac{d-p}{p}}\right)  \right)
	\cdot\left(  \nabla_{A}\left(  \overline{u}\left\vert x\right\vert
	^{\frac{d-p}{p}}\right)  -\nabla\left(  \overline{u}\left\vert x\right\vert
	^{\frac{d-p}{p}}\right)  \right)  =0.
	\]
	Equivalently%
	\[
	-\operatorname{Re}\left\vert x\right\vert ^{-\frac{d-p}{p}}u\left\vert
	x\right\vert ^{\frac{d-p}{p}-2}x\cdot iA\overline{u}\left\vert x\right\vert
	^{\frac{d-p}{p}}=0
	\]
	which is true since $A$ is real vector potential. 
\end{proof}

\textit{Step 3.} Now, if we prove the following algebraic inequality: $x,y\in\mathbb{C}^{d}:$%
\[
C_{p}\left(  x,y\right)  =\left\vert x\right\vert ^{p}-\left\vert
x-y\right\vert ^{p}-p\left\vert x-y\right\vert ^{p-2}\operatorname{Re}\left(
x-y\right)  \cdot\overline{y}\geq c_{p}\left\vert y\right\vert ^{p}%
\]
with $$c_{p}=\inf_{\left(
	s,t\right)  \in\mathbb{R}^{2}\setminus\left\{  \left(  0,0\right)  \right\}
}\frac{\left[  t^{2}+s^{2}+2s+1\right]  ^{\frac{p}{2}}-1-ps}{\left[
	t^{2}+s^{2}\right]  ^{\frac{p}{2}}}\in\left(  0,1\right], \text{when }  p\geq2,  $$
we complete the proof of the theorem.

\begin{proof}
	Let $x-y=a+ib$ and $y=c+id$ with $a,b,c,d\in$ $\mathbb{R}^{d}$. Then
	\begin{align*}
		\left\vert x\right\vert ^{2} &  =\left\vert a+c\right\vert ^{2}+\left\vert
		b+d\right\vert ^{2}\\
		&  =\left\vert a\right\vert ^{2}+\left\vert b\right\vert ^{2}+2\left(  a\cdot
		c+b\cdot d\right)  +\left\vert c\right\vert ^{2}+\left\vert d\right\vert
		^{2}\\
		\left\vert x-y\right\vert ^{2} &  =\left\vert a\right\vert ^{2}+\left\vert
		b\right\vert ^{2}\\
		\left\vert y\right\vert ^{2} &  =\left\vert c\right\vert ^{2}+\left\vert
		d\right\vert ^{2}.%
	\end{align*}
	Then
	\begin{align*}
		C_{p}\left(  x,y\right)   &  =\left\vert \left\vert a\right\vert
		^{2}+\left\vert b\right\vert ^{2}+2\left(  a\cdot c+b\cdot d\right)
		+\left\vert c\right\vert ^{2}+\left\vert d\right\vert ^{2}\right\vert
		^{\frac{p}{2}}\\
		&  -\left\vert \left\vert a\right\vert ^{2}+\left\vert b\right\vert
		^{2}\right\vert ^{\frac{p}{2}}-p\left\vert \left\vert a\right\vert
		^{2}+\left\vert b\right\vert ^{2}\right\vert ^{\frac{p}{2}-1}\left(  a\cdot
		c+b\cdot d\right).
	\end{align*}
	If $\left\vert a\right\vert ^{2}+\left\vert b\right\vert ^{2}=0$ or
	$\left\vert c\right\vert ^{2}+\left\vert d\right\vert ^{2}=0$, then it is
	obvious. If $\left\vert a\right\vert ^{2}+\left\vert b\right\vert ^{2}\neq0$
	and $\left\vert c\right\vert ^{2}+\left\vert d\right\vert ^{2}\neq0$, then we
	need to prove that%
	\[
	c_{p}=\inf\frac{\left[
		\begin{array}
			[c]{c}%
			\left\vert \left\vert a\right\vert ^{2}+\left\vert b\right\vert ^{2}+2\left(
			a\cdot c+b\cdot d\right)  +\left\vert c\right\vert ^{2}+\left\vert
			d\right\vert ^{2}\right\vert ^{\frac{p}{2}}\\
			-\left\vert \left\vert a\right\vert ^{2}+\left\vert b\right\vert
			^{2}\right\vert ^{\frac{p}{2}}-p\left\vert \left\vert a\right\vert
			^{2}+\left\vert b\right\vert ^{2}\right\vert ^{\frac{p}{2}-1}\left(  a\cdot
			c+b\cdot d\right)
		\end{array}
		\right]  }{\left\vert \left\vert c\right\vert ^{2}+\left\vert d\right\vert
		^{2}\right\vert ^{\frac{p}{2}}}>0\text{.}%
	\]
	We set $s=\frac{a\cdot c+b\cdot d}{\left\vert a\right\vert ^{2}+\left\vert
		b\right\vert ^{2}}$, $\frac{\left\vert c\right\vert ^{2}+\left\vert
		d\right\vert ^{2}}{\left\vert a\right\vert ^{2}+\left\vert b\right\vert ^{2}%
	}=s^{2}+t^{2}$ (since $s^{2}=\frac{\left(  a\cdot c+b\cdot d\right)  ^{2}%
	}{\left(  \left\vert a\right\vert ^{2}+\left\vert b\right\vert ^{2}\right)
		^{2}}\leq\frac{\left\vert c\right\vert ^{2}+\left\vert d\right\vert ^{2}%
	}{\left\vert a\right\vert ^{2}+\left\vert b\right\vert ^{2}}$). Then
	\[
	c_{p}=\inf_{\left(  s,t\right)  \in\mathbb{R}^{2}\setminus\left\{  \left(
		0,0\right)  \right\}  }\frac{\left[  t^{2}+s^{2}+2s+1\right]  ^{\frac{p}{2}%
		}-1-ps}{\left[  t^{2}+s^{2}\right]  ^{\frac{p}{2}}}.
	\]
	We show that $0<c_{p}\leq1$. Indeed, choose $s=-\frac{1}{2}$, and let
	$t\rightarrow\infty$, we deduce that $c_{p}\leq1$. To see that $c_{p}>0$, we
	note that $\left[  t^{2}+s^{2}+2s+1\right]  ^{\frac{p}{2}}\geq1+\left(
	t^{2}+s^{2}+2s\right)  \frac{p}{2}>1+ps$ for all $\left(  s,t\right)
	\in\mathbb{R}^{2}\setminus\left\{  \left(  0,0\right)  \right\}  $ by
	Bernoulli's inequality. Also, when $t^{2}+s^{2}\rightarrow\infty$,
	$c_{p}\rightarrow1.$ Finally, when $t^{2}+s^{2}\rightarrow0$, then
	$s\rightarrow0$. Let $N_{p}:=\inf_{x\in\mathbb{R}^{+}}\left[  \left(
	x+1\right)  ^{\frac{p}{2}}-x^{\frac{p}{2}}\right]$ and since $p\geq 2$ we have $N_p \geq1$. Then when
	$s> -\frac{1}{2}$ we get $\left[  t^{2}+s^{2}+2s+1\right]  ^{\frac{p}{2}}\geq\left[
	2s+1\right]  ^{\frac{p}{2}}+N_{p}\left[  t^{2}+s^{2}\right]  ^{\frac{p}{2}%
	}\geq1+ps+N_{p}\left[  t^{2}+s^{2}\right]  ^{\frac{p}{2}}$. Hence,
	\[
	\lim_{t^{2}+s^{2}\rightarrow0}\frac{\left[  t^{2}+s^{2}+2s+1\right]
		^{\frac{p}{2}}-1-ps}{\left[  t^{2}+s^{2}\right]  ^{\frac{p}{2}}}\geq1\text{.}%
	\]
	\endproof
	\begin{prop}\label{prop} If $p\in (1, 2)$ then $c_p=0$. 
	\end{prop}
	\proof By Bernoulli inequality it is trivial that $c_p\geq 0$. 
	On the other hand, taking $s=t$ we obtain $$c_p\leq \inf_{s\in \rr\setminus\{0\}} \frac{(2s^2+2s+1)^{\frac{p}{2}}-1-ps}{2^{\frac{p}{2}}s^p}.$$
	%To end up the argument It is enough to show that 
	%$$\lim_{s\to 0} \frac{(2s^2+2s+1)^{\frac{p}{2}}-1-ps}{s^p}=0.$$
	Considering the function $f(x)=(x+1)^{\frac{p}{2}}, x\geq 0,$ and expanding in Taylor series we have $f(x)=1+\frac{p}{2}x +\frac{p}{4}(\frac{p}{2}-1) x^2 + O(x^3)$, as $x\to 0$. This implies 
	$$ \frac{(2s^2+2s+1)^{\frac{p}{2}}-1-ps}{s^p}=\frac{p^2}{2}s^{2-p}+O(s^{3-p})\to 0, \text{ as } s\to 0.$$
	Thus $c_p=0$.
\end{proof}

\subsection{Proof of Theorem \ref{main_th_3}}

With the same election of $\eta$ as in Section \ref{sec_th1} and the transformation $w=u|x|^{\frac{d-p}{p}}$ we have 
\begin{align}\label{est1_1}
	\int_{\rr^d} &\frac{|u|^p}{|x|^p(|\log |x||^p +1)}  \dx\nonumber\\
	& =\int_{\rr^d} \frac{|(1-\eta)u+\eta u|^p}{|x|^p(|\log |x||^p +1)} \dx \nonumber\\
	&\leq 2^{p-1} \left(\int_{\rr^d} \frac{|(1-\eta) u|^p}{|x|^p(|\log |x||^p +1)} \dx + \int_{\rr^d} \frac{|\eta u|^p}{|x|^p(|\log |x||^p +1)} \dx \right)\nonumber\\
	& \leq 2^{p-1} \Bigg(\int_{\Omega_R} \frac{|(1-\eta) u|^p}{|x|^p(|\log |x||^p +1)} \dx + \int_{B_1(0)} \frac{|\eta u|^p}{|x|^p(|\log |x||^p +1)} \dx\nonumber\\
	& + \int_{B_1^c(0)} \frac{|\eta u|^p}{|x|^p(|\log |x||^p +1)} \dx\Bigg)\nonumber\\
	& \leq 2^{p-1} \Bigg(\int_{\Omega_R} \frac{| u|^p}{|x|^p(|\log |x||^p +1)} \dx + \int_{B_1(0)} \frac{|\eta u|^p}{|x|^p(|\log |x||^p +1)} \dx\nonumber\\
	& + \int_{B_1^c(0)} \frac{|\eta u |^p}{|x|^p(|\log |x||^p +1)} \dx\Bigg)\nonumber\\
&= 2^{p-1} \Bigg(\int_{\Omega_R} \frac{|\eta w|^p}{|x|^d(|\log |x||^p +1)} \dx + \int_{B_1(0)} \frac{|\eta w|^p}{|x|^d(|\log |x||^p +1)} \dx\nonumber\\
& + \int_{B_1^c(0)} \frac{|\eta w|^p}{|x|^d(|\log |x||^p +1)} \dx\Bigg)\nonumber\\
	&:=2^{p-1}(J_1(u)+J_2(u)+J_3(u)). 
\end{align} 
First we have from Lemma \ref{lema_compact} that
\begin{align}\label{est5_1}
	J_1(u)& \leq \int_{\Omega_R} \frac{|w|^p}{|x|^{d}} \dx  \leq R^d \int_{\Omega_R} |w|^p \dx \leq \frac{R^d}{\mu_B(R)} \int_{\Omega_R} |\nabla_A w|^p \dx.
\end{align}

By Lemma \ref{Hardy_outside_ball}, item \eqref{i0_1} with the choice $\tilde{R}=1$, we have 
\begin{align*}%\label{est4_1}
	J_2(u) &\leq \int_{B_1(0)}  \frac{|\eta w|^p}{|x|^d|\log |x||^p}\dx \nonumber\\
	& \leq  \left(\frac{p}{p-1}\right)^p \int_{B_1(0)}|\nabla (\eta |w|)|^p|x|^{p-d} \dx \nonumber\\
	& \leq 2^{p-1} \left(\frac{p}{p-1}\right)^p \int_{B_1(0)}\left( |\nabla \eta|^p|w|^p + |\eta \nabla (|w|)|^p \right)|x|^{p-d}\dx \nonumber\\
	& \leq 2^{p-1} \left(\frac{p}{p-1}\right)^p   \left( \|\nabla \eta\|_{L^\infty(\rr^d)}^p \int_{B_1(0)\setminus B_{\frac{1}{R}}(0)} |w|^p|x|^{p-d}\dx  + \int_{B_1(0)}|  \nabla (|w|)|^p|x|^{p-d} \dx  \right)\nonumber\\
	& \leq 2^{p-1} \left(\frac{p}{p-1}\right)^p   \left( \|\nabla \eta\|_{L^\infty(\rr^d)}^p \int_{\Omega_R(0)} |w|^p|x|^{p-d}\dx  + \int_{B_1(0)}|  \nabla (|w|)|^p |x|^{p-d} \dx  \right) 
\end{align*}
Applying Lemma \ref{lema_compact} and the diamagnetic inequality from above we get 
\begin{align}\label{est7_1}
	I_2(u)  &\leq 2^{p-1}  \left(\frac{p}{p-1}\right)^p   \left(  \frac{R^{d-p}\|\nabla \eta\|_{L^\infty(\rr^d)}^p}{\mu_B(R)}\int_{\Omega_R} |\nabla_A w|^p\dx  + \int_{B_1(0)}|  \nabla_A w|^p|x|^{p-d} \dx  \right) \nonumber\\
	& \leq 2^{p-1} \left(\frac{p}{p-1}\right)^p   \left(  \frac{R^{2(d-p)}\|\nabla \eta\|_{L^\infty(\rr^d)}^p}{\mu_B(R)}\int_{\Omega_R} |\nabla_A w|^p|x|^{p-d}\dx  + \int_{B_1(0)}|  \nabla_A w|^p|x|^{p-d} \dx  \right) \nonumber\\
	& \leq 2^{p-1} \left(\frac{p}{p-1}\right)^p   \left(  \frac{R^{2(d-p)}\|\nabla \eta\|_{L^\infty(\rr^d)}^p}{\mu_B(R)}+1 \right)\int_{B_R} |\nabla_A w|^p|x|^{p-d}\dx 
\end{align}
Similarly, by Lemma \ref{Hardy_outside_ball}, item \eqref{i3}, we get  
\begin{align*}%\label{est4_1}
	J_3(u) &\leq \int_{B_1^c(0)}  \frac{|\eta w|^p}{|x|^d|\log |x||^p}\dx \nonumber\\
	& \leq  \left(\frac{p}{p-1}\right)^p \int_{B_1^c(0)}|\nabla (\eta |w|)|^p|x|^{p-d} \dx \nonumber\\
	& \leq 2^{p-1} \left(\frac{p}{p-1}\right)^p \int_{B_1^c(0)}\left( |\nabla \eta|^p|w|^p + |\eta \nabla (|w|)|^p \right)|x|^{p-d}\dx \nonumber\\
	& \leq 2^{p-1} \left(\frac{p}{p-1}\right)^p   \left( \|\nabla \eta\|_{L^\infty(\rr^d)}^p \int_{B_R(0)\setminus B_1(0)} |w|^p|x|^{p-d}\dx  + \int_{B_1^c(0)}|  \nabla (|w|)|^p|x|^{p-d} \dx  \right)\nonumber\\
	& \leq 2^{p-1} \left(\frac{p}{p-1}\right)^p   \left( \|\nabla \eta\|_{L^\infty(\rr^d)}^p \int_{\Omega_R} |w|^p|x|^{p-d}\dx  + \int_{B_1^c(0)}|  \nabla (|w|)|^p |x|^{p-d} \dx  \right) 
\end{align*}
Applying Lemma \ref{lema_compact} and the diamagnetic inequality from above we get 
\begin{align}\label{est7_2}
	I_2(u)  &\leq 2^{p-1}  \left(\frac{p}{p-1}\right)^p   \left(  \frac{R^{d-p}\|\nabla \eta\|_{L^\infty(\rr^d)}^p}{\mu_B(R)}\int_{\Omega_R} |\nabla_A w|^p\dx  + \int_{B_1^c(0)}|  \nabla_A w|^p|x|^{p-d} \dx  \right) \nonumber\\
	& \leq 2^{p-1} \left(\frac{p}{p-1}\right)^p   \left(  \frac{R^{2(d-p)}\|\nabla \eta\|_{L^\infty(\rr^d)}^p}{\mu_B(R)}\int_{\Omega_R} |\nabla_A w|^p|x|^{p-d}\dx  + \int_{B_1^c(0)}|  \nabla_A w|^p|x|^{p-d} \dx  \right) \nonumber\\
	& \leq 2^{p-1} \left(\frac{p}{p-1}\right)^p   \left(  \frac{R^{2(d-p)}\|\nabla \eta\|_{L^\infty(\rr^d)}^p}{\mu_B(R)}+1 \right)\int_{\rr^d} |\nabla_A w|^p|x|^{p-d}\dx 
\end{align}

Combining \eqref{est1_1}, \eqref{est5_1}, \eqref{est7_1}, \eqref{est7_2} and Theorem \ref{main_th_2} we finally obtain 
\begin{align*}
	\int_{\rr^d} \frac{|u|^p}{|x|^p(|\log |x||^p +1)}  \dx & \leq \tilde{C}_{B, p, d}\int_{\rr^d} |\nabla_A w|^p|x|^{p-d} \dx, \\
	& = \tilde{C}_{B, p, d}\int_{\rr^d} |\nabla_A (u |x|^{\frac{d-p}{p}})|^p|x|^{p-d} \dx, \\
	& \leq c(p)\tilde{C}_{B, p, d} \left(\int_{\rr^d}|\nabla u|^2 dx- \mu_{p, d} \int_{\rr^d}\frac{|u|^p}{|x|^p} dx\right),
\end{align*}
where $c(p)$ is the constant in \eqref{H_impr2} and 
\begin{equation*}
	\tilde{C}_{p, B, d} := 
		2^{p-1}\left(\frac{R^d}{\mu_B(R)}+2^{p} \left(\frac{p}{p-1}\right)^p   \left(  \frac{R^{2(d-p)}\|\nabla \eta\|_{L^\infty(\rr^d)}^p}{\mu_B(R)}+1\right)\right) 
\end{equation*}
Thus The proof of Theorem \ref{main_th_3} is completed now with ${C}_{B, p, d}:=c(p)\tilde{C}_{B, p, d}$. 
$\hfill$	$\square$

\section{Aharonov-Bohm magnetic fields}

\subsection{Proof of Theorem \ref{main_th_4}}

\begin{proof}
	By using polar coordinates we have 
	\[
	\int\limits_{\mathbb{R}^{2}}\left\vert \nabla_{A}u\right\vert ^{p}%
	\dx=\int\limits_{0}^{\infty}\int\limits_{0}^{2\pi}\left[  \left\vert
	\partial_{r}u\right\vert ^{2}+\frac{\left\vert \partial_{\varphi}u+i\beta
		u\right\vert ^{2}}{r^{2}}\right]  ^{\frac{p}{2}}d\varphi rdr
	\]
	Consider the $L^{q}$-spaces on $\left(  0,2\pi\right)  \times\left(
	0,\infty\right)  $ with the measure $d\varphi rdr$ equipped with%
	\[
	\left\Vert w\right\Vert _{q}=\left(  \int\limits_{0}^{\infty}\int
	\limits_{0}^{2\pi}\left\vert w\right\vert ^{q}d\varphi rdr\right)  ^{\frac
		{1}{q}}\text{.}%
	\]
	Then
	\begin{align*}
		\left(  \int\limits_{\mathbb{R}^{2}}\left\vert \nabla_{A}u\right\vert
		^{p}\dx\right)  ^{\frac{2}{p}}  &  =\left(  \int\limits_{0}^{\infty}%
		\int\limits_{0}^{2\pi}\left[  \left\vert \partial_{r}u\right\vert ^{2}%
		+\frac{\left\vert \partial_{\varphi}u+i\beta u\right\vert ^{2}}{r^{2}}\right]
		^{\frac{p}{2}}d\varphi rdr\right)  ^{\frac{2}{p}}\\
		&  =\left\Vert \left\vert \partial_{r}u\right\vert ^{2}+\frac{\left\vert
			\partial_{\varphi}u+i\beta u\right\vert ^{2}}{r^{2}}\right\Vert _{\frac{p}{2}%
		}.
	\end{align*}
	Now, by applying the reverse Minkowski inequality that asserts that when $q<1:$%
	\[
	\left\Vert f+g\right\Vert _{q}\geq\left\Vert f\right\Vert _{q}+\left\Vert
	g\right\Vert _{q}%
	\]
	when both $f$ and $g$ are non-negative, we get (since $0<\frac{p}{2}<1$)
	\begin{align*}
		\left(  \int\limits_{\mathbb{R}^{2}}\left\vert \nabla_{A}u\right\vert
		^{p}\dx\right)  ^{\frac{2}{p}}  &  =\left\Vert \left\vert \partial
		_{r}u\right\vert ^{2}+\frac{\left\vert \partial_{\varphi}u+i\beta u\right\vert
			^{2}}{r^{2}}\right\Vert _{\frac{p}{2}}\\
		&  \geq\left\Vert \left\vert \partial_{r}u\right\vert ^{2}\right\Vert
		_{\frac{p}{2}}+\left\Vert \frac{\left\vert \partial_{\varphi}u+i\beta
			u\right\vert ^{2}}{r^{2}}\right\Vert _{\frac{p}{2}}\\
		&  =\left(  \int\limits_{0}^{\infty}\int\limits_{0}^{2\pi}\left\vert
		\partial_{r}u\right\vert ^{p}d\varphi rdr\right)  ^{\frac{2}{p}}+\left(
		\int\limits_{0}^{\infty}\int\limits_{0}^{2\pi}\frac{\left\vert \partial
			_{\varphi}u+i\beta u\right\vert ^{p}}{r^{p}}d\varphi rdr\right)  ^{\frac{2}%
			{p}}.
	\end{align*}
	So
	\[
	\int\limits_{\mathbb{R}^{2}}\left\vert \nabla_{A}u\right\vert ^{p}%
	\dx\geq\left[  \left(  \int\limits_{0}^{\infty}\int\limits_{0}^{2\pi}\left\vert
	\partial_{r}u\right\vert ^{p}d\varphi rdr\right)  ^{\frac{2}{p}}+\left(
	\int\limits_{0}^{\infty}\int\limits_{0}^{2\pi}\frac{\left\vert \partial
		_{\varphi}u+i\beta u\right\vert ^{p}}{r^{p}}d\varphi rdr\right)  ^{\frac{2}%
		{p}}\right]  ^{\frac{p}{2}}\text{.}%
	\]

	It is easy to see that
	\begin{align*}
		\int\limits_{0}^{\infty}\int\limits_{0}^{2\pi}\left\vert \partial
		_{r}u\right\vert ^{p}d\varphi rdr  &  =\int\limits_{\mathbb{R}^{2}}\left\vert
		\frac{x}{\left\vert x\right\vert }\cdot\nabla u\right\vert ^{p}\dx\\
		&  \geq\left(  \frac{2-p}{p}\right)  ^{p}\int\limits_{\mathbb{R}^{2}}%
		\frac{\left\vert u\right\vert ^{p}}{\left\vert x\right\vert ^{p}}\dx.
	\end{align*}
	Now let
	\[
	\lambda\left(  \beta,p\right)  :=\inf_{u\in W^{1,p}\left( 
		0,2\pi \right), u(0)=u(2\pi)}\frac
	{\int\limits_{0}^{2\pi}\left\vert \partial_{\varphi}u+i\beta u\right\vert
		^{p}d\varphi}{\int\limits_{0}^{2\pi}\left\vert u\right\vert ^{p}d\varphi
	}\text{.}%
	\]
	We claim that $\lambda\left(  \beta,p\right)  >0$. Therefore
	\begin{align*}
		\int\limits_{0}^{\infty}\int\limits_{0}^{2\pi}\frac{\left\vert \partial
			_{\varphi}u+i\beta u\right\vert ^{p}}{r^{p}}d\varphi rdr  &  \geq
		\lambda\left(  \beta,p\right)  \int\limits_{0}^{\infty}\int\limits_{0}^{2\pi
		}\frac{\left\vert u\right\vert ^{p}}{r^{p}}d\varphi rdr\\
		&  =\lambda\left(  \beta,p\right)  \int\limits_{\mathbb{R}^{2}}\frac
		{\left\vert u\right\vert ^{p}}{\left\vert x\right\vert ^{p}}dx.
	\end{align*}

	Hence
	\begin{align*}
		\int\limits_{\mathbb{R}^{2}}\left\vert \nabla_{A}u\right\vert ^{p}\dx  &
		\geq\left[  \left(  \int\limits_{0}^{\infty}\int\limits_{0}^{2\pi}\left\vert
		\partial_{r}u\right\vert ^{p}d\varphi rdr\right)  ^{\frac{2}{p}}+\left(
		\int\limits_{0}^{\infty}\int\limits_{0}^{2\pi}\frac{\left\vert \partial
			_{\varphi}u+i\beta u\right\vert ^{p}}{r^{p}}d\varphi rdr\right)  ^{\frac{2}%
			{p}}\right]  ^{\frac{p}{2}}\\
		&  \geq\left[  \left(  \frac{2-p}{p}\right)  ^{2}+\lambda\left(
		\beta,p\right)  ^{\frac{2}{p}}\right]  ^{\frac{p}{2}}\int\limits_{\mathbb{R}%
			^{2}}\frac{\left\vert u\right\vert ^{p}}{\left\vert x\right\vert ^{p}}\dx\\
		&  >\left(  \frac{2-p}{p}\right)  ^{p}\int\limits_{\mathbb{R}^{2}}%
		\frac{\left\vert u\right\vert ^{p}}{\left\vert x\right\vert ^{p}}\dx\text{,}%
	\end{align*}
	and the proof finishes. 
	
	Now, it remains to show  $	\lambda\left(  \beta,p\right) >0$.

	Indeed, assume by contradiction that there exists a sequence $\{
	u_{n}\}_n\subset W^{1,p}\left(  \left[  0,2\pi\right]  \right)    $ such that
	$\int\limits_{0}^{2\pi}\left\vert u_{n}\right\vert ^{p}d\varphi=2\pi$ and
	$\int\limits_{0}^{2\pi}\left\vert \partial_{\varphi}u_{n}+i\beta
	u_{n}\right\vert ^{p}d\varphi\rightarrow0$. Hence, $\int\limits_{0}^{2\pi
	}\left\vert \partial_{\varphi}u_{n}\right\vert ^{p}d\varphi$ is bounded. As a
	consequence, $u_{n}\rightharpoonup u$ weakly in $W^{1,p}\left(  \left[
	0,2\pi\right]  \right)  $. Note that because of the embedding $W^{1,p}\left(
	\left[  0,2\pi\right]  \right)  \hookrightarrow C^{0,1-\frac{1}{p}}\left(
	\left[  0,2\pi\right]  \right)  $, we have that $u\in C^{0,1-\frac{1}{p}%
	}\left(  \left[  0,2\pi\right]  \right)  $. We can also get that
	$u_{n}\rightarrow u$ strongly in $L^{p}\left(  \left[  0,2\pi\right]  \right)
	$ because of the compact embedding $W^{1,p}\left(  \left[  0,2\pi\right]
	\right)  \hookrightarrow\hookrightarrow L^{p}\left(  \left[  0,2\pi\right]
	\right)  $, and that $\partial_{\varphi}u_{n}\rightharpoonup\partial_{\varphi
	}u$ weakly in $L^{p}\left(  \left[  0,2\pi\right]  \right)  $. So,
	$\partial_{\varphi}u_{n}+i\beta u_{n}\rightharpoonup\partial_{\varphi}u+i\beta
	u$ weakly in $L^{p}\left(  \left[  0,2\pi\right]  \right)  $. As a
	consequence of the lower semi-continuity property of the $L^p$ norm, 
	\[
	\int\limits_{0}^{2\pi}\left\vert \partial_{\varphi}u+i\beta u\right\vert
	^{p}d\varphi\leq\lim\inf\int\limits_{0}^{2\pi}\left\vert \partial_{\varphi
	}u_{n}+i\beta u_{n}\right\vert ^{p}d\varphi=0.
	\]
	That is $u=e^{-i\beta\varphi}$. Now, since $u_{n}\rightharpoonup u$ weakly in
	$W^{1,p}\left(  \left[  0,2\pi\right]  \right)  $, by Mazur's lemma
	%	(\href{https://en.wikipedia.org/wiki/Mazur%27s_lemma}{https://en.wikipedia.org/wiki/Mazur\%27s\_lemma}%
	), there exists a sequence ($v_{n}$) made up of convex combinations of the
	$u_{n}$'s that converges strongly to $u$. Therefore, by the embedding
	$W^{1,p}\left(  \left[  0,2\pi\right]  \right)  \hookrightarrow C^{0,1-\frac
		{1}{p}}\left(  \left[  0,2\pi\right]  \right)  $, we have $\left\Vert
	v_{n}-u\right\Vert _{C^{0,1-\frac{1}{p}}\left(  \left[  0,2\pi\right]
		\right)  }$ $\lesssim\left\Vert v_{n}-u\right\Vert _{W^{1,p}\left(  \left[
		0,2\pi\right]  \right)  }\rightarrow0$. As a consequence, $v_{n}\left(
	0\right)  \rightarrow u\left(  0\right)  $ and $v_{n}\left(  2\pi\right)
	\rightarrow u\left(  2\pi\right)  $. But since $u_{n}\left(  0\right)
	=u_{n}\left(  2\pi\right)  $ for all $n$, we get $v_{n}\left(  0\right)
	=v_{n}\left(  2\pi\right)  $ for all $n$. Hence, $u\left(  0\right)  =u\left(
	2\pi\right)  $ which is impossible since $\beta\notin\mathbb{Z}$. Therefore, $\lambda\left(  \beta,p\right)  >0$.
\end{proof}

\subsection{Direct proof of Theorem \ref{th3}}\label{sec4}
First we denote $\partial_{A_1} := \partial_{x_1} + i \beta A_1$ 
and  $\partial_{A_2} := \partial_{x_2} + i \beta A_2$,
where
%
%\begin{equation}
$A = (A_1,A_2) := 
\left( \frac{x_2}{|x|^2},\frac{-x_1}{|x|^2} \right)
\,.$
%\end{equation}
%
Let $t>-\frac{1}{2\beta}$ be a real number which will be well specified later.
Successively we have the identity  
\begin{equation}\label{identity} 
\begin{aligned}
(1+\beta t) \int_{\rr^2} \frac{|u|^p}{|x|^p} \dx
&= \int_{\rr^2} \left[
\partial_{A_1} \left( 
\frac{1}{2-p} \frac{x_1}{|x|^p} - it \frac{x_2}{|x|^p} 
\right)
+ \partial_{A_2} \left( 
\frac{1}{2-p} \frac{x_2}{|x|^p} + it \frac{x_1}{|x|^p} 
\right)
\right]
|u|^p \dx
\\
&= \operatorname{Re} \int_{\rr^2} |u|^{p-2}
\Bigg\{
\left( 
\frac{-p}{2-p} \frac{x_1}{|x|^p} - it \frac{x_2}{|x|^p} 
\right)
\left( 
\frac{1}{2}\partial_{x_1} |u|^2 + i\beta \frac{x_2}{|x|^2} |u|^2 
\right)
\\
& \qquad\qquad\qquad
+ \left( 
\frac{-p}{2-p} \frac{x_2}{|x|^p} + it \frac{x_1}{|x|^p} 
\right)
\left( 
\frac{1}{2}\partial_{x_2} |u|^2 - i\beta \frac{x_1}{|x|^2} |u|^2 
\right)
\Bigg\}\dx 
\\
&= \operatorname{Re} \int_{\rr^2} |u|^{p-2}
\Bigg\{
\left( 
\frac{-p}{2-p} \frac{x_1}{|x|^p} - it \frac{x_2}{|x|^p} 
\right)
\left( 
\frac{1}{2} (\bar{u}\partial_{x_1} u + u\partial_{x_1}\bar{u})
+ i\beta \frac{x_2}{|x|^2} |u|^2 
\right)
\\
& \qquad\qquad\qquad
+ \left( 
\frac{-p}{2-p} \frac{x_2}{|x|^p} + it \frac{x_1}{|x|^p} 
\right)
\left( 
\frac{1}{2} (\bar{u}\partial_{x_2} u + u\partial_{x_2}\bar{u}) 
- i\beta \frac{x_1}{|x|^2} |u|^2 
\right)
\Bigg\}\dx 
\\
\end{aligned}  
\end{equation}
So,
\begin{equation}\label{identity2} 
\begin{aligned}
(1+\beta t) \int_{\rr^2} \frac{|u|^p}{|x|^p}\dx 
&= \operatorname{Re}\int_{\rr^2} |u|^{p-2}
\Bigg\{
\left( 
\frac{-p}{2-p} \frac{x_1}{|x|^p} - it \frac{x_2}{|x|^p} 
\right)
\left( 	
\frac{1}{2} \bar{u} \partial_{A_1} u 
+ \frac{1}{2} u \partial_{A_1} \bar{u} 
\right)
\\
& \qquad\qquad\qquad
+ \left( 
\frac{-p}{2-p} \frac{x_2}{|x|^p} + it \frac{x_1}{|x|^p} 
\right)
\left( 
\frac{1}{2} \bar{u} \partial_{A_2} u 
+ \frac{1}{2} u \partial_{A_2} \bar{u} 
\right)
\Bigg\}\dx \\
&= \operatorname{Re} \int_{\rr^2} |u|^{p-2} 
\left( 
\frac{-p}{2-p} \frac{x}{|x|^p} - it A\frac{1}{|x|^{p-2}} 
\right) \cdot 
\left( 	
\frac{1}{2} \bar{u} \nabla_{A_\beta} u
+ \frac{1}{2} u \nabla_{A_\beta}  \bar{u} 
\right)\dx.
\end{aligned}  
\end{equation}
Applying the Cauchy--Schwarz and H\"{o}lder inequalities we obtain 
\begin{align*}%\label{key1}
\lefteqn{
(1+\beta t) \int_{\rr^2} \frac{|u|^p}{|x|^p}\dx 
}
\\
 & \leq \frac{1}{2} \int_{\rr^2} \frac{|u|^{p-2}}{|x|^{p-2}}\left|\frac{-p}{2-p} \frac{x}{|x|^2} - it A\right|   
\left( 	
 |\bar{u}| |\nabla_{A_\beta} u|+ 
 |u||\nabla_{A_\beta}  \bar{u}| 
\right) \dx
\\
%\nonumber\\
&= \frac{1}{2} \sqrt{\left(\frac{p}{2-p}\right)^2 +t^2}\int_{\rr^2}   \frac{|u|^{p-1}}{|x|^{p-1}}  
\left( 	 |\nabla_{A_\beta} u|
+|\nabla_{A_\beta}  \bar{u}| 
\right) \dx 
\\
%\nonumber\\
&\leq \frac{1}{2} \sqrt{\left(\frac{p}{2-p}\right)^2 +t^2} \left(\int_{\rr^2} \frac{|u|^p}{|x|^p} \dx \right)^{\frac{p-1}{p}} \left\{\left(  \int_{\rr^2} |\nabla_{A_\beta} u|^p \dx  \right)^{\frac{1}{p}} + \left(  \int_{\rr^2} |\nabla_{A_\beta} \bar{u}|^p \dx  \right)^{\frac{1}{p}}\right\}.
\end{align*} 
Dividing properly the common terms above we get
\begin{equation}\label{key2}
\frac{1+\beta t}{ \sqrt{\left(\frac{p}{2-p}\right)^2 +t^2}} \left( \int_{\rr^2} \frac{|u|^p}{|x|^p} \dx \right)^{\frac{1}{p}} \leq \frac{\|\nabla_{A_\beta} u\|_{L^p(\rr^2)}+\|\nabla_{A_\beta} \bar{u}\|_{L^p(\rr^2)}}{2}, \quad \forall t> -\frac{1}{\beta}.  
\end{equation}
Considering the function $$f(t):=\frac{1+\beta t}{ \sqrt{\left(\frac{p}{2-p}\right)^2 +t^2}}$$
we obtain that 
$$f^\prime(t)=\frac{\beta \left(\frac{p}{2-p}\right)^2-t}{\left(\left(\frac{p}{2-p}\right)^2 +t^2\right)^{\frac{5}{2}}}.$$
Notice that $t=\beta \left(\frac{p}{2-p}\right)^2$ is a maximum point of $f$ and since $$f\left(\beta \left(\frac{p}{2-p}\right)^2\right)=\frac{\sqrt{(2-p)^2+\beta^2 p^2}}{p}$$
from \eqref{key2} we finally obtain   
\begin{equation}\label{key4}
 \int_{\rr^2} \frac{|u|^p}{|x|^p} \leq  \left(\frac{p}{\sqrt{(2-p)^2+\beta^2 p^2}}\right)^p \left(\frac{\|\nabla_{A_\beta}  u\|_{L^p(\rr^2)}+\|\nabla_{A_\beta} \bar{u}\|_{L^p(\rr^2)}}{2}\right)^p. 
\end{equation}

\subsection*{Acknowledgments} 
The first author (C.C.)  was partially supported by a grant of the Ministry of Research, Innovation and Digitization, CNCS-UEFISCDI, project number PN-III-P1-1.1-TE-2021-1539, within PNCDI III. The second author (D.K.) was supported by the EXPRO grant No.~20-17749X of the Czech Science Foundation. The third author (N.L.) was partially supported by an NSERC Discovery Grant. 

{\footnotesize
\addcontentsline{toc}{section}{References}
\bibliographystyle{amsplain}
%\bibliography{bib}

\begin{thebibliography}{10}
	
	\bibitem{Adi} Adimurthi, N. Chaudhuri and M.  Ramaswamy,
	\emph{An improved {H}ardy-{S}obolev inequality and its application}, Proc. Amer. Math. Soc. {\bf 130} (2002), no. 2, 489--505. 
	
	\bibitem{A} L. Aermark, \emph{Hardy and spectral inequalities for a class of partial differential operators}, PhD Thesis, Stockholm, 2014. 
	
	\bibitem{AH}  W. Allegretto and  Y. X. Huang, \emph{A Picone's identity for the p-Laplacian and applications}. Nonlinear Anal. 32 (1998), no. 7, 819--830. 
	
	
	\bibitem{BEL}  A. Balinsky, W. D.  Evans and R. T. Lewis, \emph{The analysis and geometry of Hardy's inequality}. Universitext. Springer, Cham, 2015.
	
	\bibitem{BFT} G. Barbatis, S. Filippas and  A. Tertikas, \emph{A unified approach to improved $L^p$ Hardy inequalities with best constants}. Trans. Amer. Math. Soc. 356 (2004), no. 6, 2169--2196.
	
	\bibitem{BCF} B. Cassano, L. Cossetti, L. Fanelli. \emph{Improved Hardy-Rellich inequalities}. Commun. Pure Appl. Anal., doi: 10.3934/cpaa.2022002
	
	\bibitem{BV}  H. Brezis and J. L.  V\'{a}zquez, \emph{Blow-up solutions of some nonlinear elliptic problems.} Rev. Mat. Univ. Complut. Madrid 10 (1997), no. 2, 443--469.
	
	
	\bibitem{CK} C. Cazacu and D. Krej\v{c}i\v{r}\'{\i}k,
	\emph{The {H}ardy inequality and the heat equation with magnetic
		field in any dimension}, Comm. Partial Differential Equations {\bf 41} (2016)
	no. 7, 1056--1088. 
	
	\bibitem{BFKP} B. Cassano, V. Franceschi, D. Krej\v{c}i\v{r}\'{\i}k and D. Prandi,
	\emph{Horizontal magnetic fields and improved Hardy inequalities in the Heisenberg group},
	Comm. Partial Differential Equations, to appear. Preprint on arXiv:2110.13775 [math.SP]. 
	
	\bibitem{D}  B. Dacorogna, \emph{Direct methods in the calculus of variations}.  Applied Mathematical Sciences, 78. Springer-Verlag, Berlin, 1989.
	
	\bibitem{DFP} B. Devyver, M. Fraas and Y. Pinchover, \emph{Optimal Hardy weight for second-order elliptic operator: an answer to a problem of Agmon.} J. Funct. Anal. 266 (2014), no. 7, 4422--4489. 
	
	\bibitem{DP} B. Devyver and Y. Pinchover, \emph{Optimal Lp Hardy-type inequalities}. Ann. Inst. H. Poincar\'{e} Anal. Non Lin\'{e}aire 33 (2016), no. 1, 93–--118.
	
	\bibitem{DEL} J. Dolbeault, M. J. Esteban and M. Loss, \emph{Critical magnetic field for 2D magnetic Dirac-Coulomb operators and Hardy inequalities}. Partial differential equations, spectral theory, and mathematical physics, 41–63, EMS Ser. Congr. Rep., Eur. Math. Soc., Z\"{u}rich, 2021. 
	
	\bibitem{EL} W.D. Evans and R.T. Lewis, \emph{On the Rellich inequality with magnetic potentials}, Math. Z. 251 (2005), no. 2, 267--284.

        \bibitem{FKLV}
L. Fanelli, D. Krej\v{c}i\v{r}\'ik, A. Laptev and L. Vega,
\emph{On the improvement of the Hardy inequality 
due to singular magnetic fields}.
Comm. Partial Differential Equations 45 (2020) 1202--1212. 

	\bibitem{FMT} S. Filippas, V. Maz'ya, V. and Tertikas, \emph{Critical Hardy-Sobolev inequalities.} J. Math. Pures Appl. (9) 87 (2007), no. 1, 37–56.
	
	
	\bibitem{AP}  J. P. Garc\'{i}a Azorero and  I. Peral Alonso, \emph{Hardy inequalities and some critical elliptic and parabolic problems}. J. Differential Equations 144 (1998), no. 2, 441--476.
	
	\bibitem{GP} R. K. Giri and Y. Pinchover, \emph{Positive Liouville theorem and asymptotic behaviour for $(p,A)$-Laplacian type elliptic equations with Fuchsian potentials in Morrey space}. Anal. Math. Phys. 10 (2020), no. 4, Paper No. 67, 34 pp.	
	
	\bibitem{HLP}  G. H. Hardy, J. E.  Littlewood and G. P\'{o}lya, \emph{Inequalities}. Reprint of the 1952 edition. Cambridge Mathematical Library. Cambridge University Press, Cambridge, 1988.
	
	\bibitem{H} E. M. Harrell, 	\emph{Geometric lower bounds for the spectrum of elliptic PDEs with Dirichlet conditions in part}. 
	J. Comput. Appl. Math. 194 (2006), no. 1, 26–35.
	
	\bibitem{KF} B. Kawohl and V. Fridman, \emph{Isoperimetric estimates for the first eigenvalue of the p-Laplace operator and the Cheeger constant}. 
	Comment. Math. Univ. Carolin. 44 (2003), no. 4, 659--667. 	
	
	%\bibitem{K} D. Krej\v{c}i\v{r}\'{\i}k, \emph{The improved decay rate for the heat semigroup with local magnetic field in the plane}. Calc. Var. Partial Differential Equations 47 (2013), no. 1-2, 207--226.
	
\bibitem{LL} N. Lam and G. Lu,	\emph{Improved Lp-Hardy and Lp-Rellich inequalities with magnetic fields}.  Vietnam Journal of Mathematics, Special Issue in honor of Professor Carlos Kenig, 2023, to appear. 
	
	\bibitem{LP} P. D. Lamberti and Y. Pinchover,  \emph{Hardy inequality on $C^{1,\alpha}$ domains},
	Ann. Sc. Norm. Super. Pisa Cl. Sci. (5) 19 (2019), 1135-1159.
	
		\bibitem{LRY} A. Laptev, M. Ruzhansky and N. Yessirkegenov, \emph{Hardy inequalities for Landau Hamiltonian and for Baouendi-Grushin operator with Aharonov-Bohm type magnetic field. Part I.} Math. Scand. 125 (2019), no. 2, 239--269.
	
	
	\bibitem{LW} A. Laptev and T. Weidl, \emph{Hardy inequalities for magnetic {D}irichlet forms}, Mathematical results in quantum mechanics ({P}rague, 1998), 299--305, Oper. Theory Adv. Appl. 108, 
	Birkh\"{a}user, Basel, 1999.
	

	\bibitem{L} P. Linqvist, \emph{On the equation $\mathrm{div}(|\nabla u|^{p-2}\nabla u)+\lambda|u|^{p-2}u=0$}. Proc. Amer. Math. Soc. 109 (1990), no. 1, 157--164.
	
	\bibitem{MMP} M. Marcus, V. J. Mizel and Y. Pinchover, \emph{
	On the best constant for Hardy's inequality in $\rr^N$}. 
	Trans. Amer. Math. Soc. 350 (1998), no. 8, 3237--3255.
	
	\bibitem{MP}  E. Mitidieri and  S. I. Pokhozhaev, \emph{Some generalizations of Bernstein's theorem.} (Russian) Differ. Uravn. 38 (2002), no. 3, 373--378, 430; translation in Differ. Equ. 38 (2002), no. 3, 392--397.
	
	\bibitem{PTT} Y. Pinchover, A. Tertikas, and K. Tintarev, \emph{A Liouville-type theorem for the $p$-Laplacian with potential term.} 
	Ann. Inst. H. Poincaré Anal. Non Linéaire 25 (2008), no. 2, 357--368.
	
	\bibitem{PT}  Y Pinchover and K. Tintarev, \emph{Ground state alternative for p-Laplacian with potential term}. Calc. Var. Partial Differential Equations 28 (2007), no. 2, 179–201.
	
	\bibitem{PS} A. Poliakovsky and I. Shafrir, \emph{Uniqueness of positive solutions for singular problems involving the p-Laplacian}. Proc. Amer. Math. Soc. 133 (2005), no. 9, 2549--2557. 
	
	\bibitem{S} I. Shafrir, \emph{Asymptotic behaviour of minimizing sequences for Hardy's inequality}. Commun. Contemp. Math. 2 (2000), no. 2, 151--189.
	
	\bibitem{Weidl_1999}
T. Weidl,
\emph{A remark on {H}ardy type inequalities for critical {S}chr{\"o}dinger
  operators with magnetic fields}.
\newblock Oper. Theory Adv. Appl. 110 (1999), 345--352.
	
%	\bibitem{Z}  H. Zhang and J. Jiang, \emph{Solutions of perturbed $p$-Laplacian equation with critical nonlinearity and magnetic fields},  Bound. Value Probl. 2013, 2013:216, 11 pp. 
	
	%\bibitem{P} Pinchover
	
\end{thebibliography}
}

 \end{document}